\newtheorem{thm}{Theorem}[section]
\newtheorem{rmk}{Remark}[section]
\newtheorem{lem}{Lemma}[section]
\newtheorem{ass}{Assumption}[section]
\newtheorem{cor}{Corollary}[section]
\title{An Analysis of Finite Element Approximation in Electrical Impedance Tomography}
\author{Matthias Gehre\footnote{Center for Industrial Mathematics,
University of Bremen, Bremen 28359, Germany (mgehre@math.uni-bremen.de)}
\and Bangti Jin\footnote{Department of Mathematics, University of California, Riverside, 900 University Ave., Riverside, CA 92521, USA (bangti.jin@gmail.com)}
\and Xiliang Lu\footnote{School of Mathematics and Statistics, Wuhan University, Wuhan 430072, P. R. China (xllv.math@whu.edu.cn)}
}
\date{\today}
\begin{document}
\maketitle
\begin{abstract}
We present a finite element analysis of electrical
impedance tomography for reconstructing the conductivity
distribution from electrode voltage measurements by means of Tikhonov regularization.
Two popular choices of the penalty term, i.e., $H^1(\Omega)$-norm smoothness penalty and
total variation seminorm penalty, are considered. A piecewise linear finite element
method is employed for discretizing the forward model, i.e., the complete electrode
model, the conductivity, and the penalty functional. The convergence of the finite element
approximations for the Tikhonov model on both polyhedral and smooth curved domains is
established. This provides rigorous justifications for the ad hoc discretization
procedures in the literature.
\\
\textbf{Keywords}: electrical impedance tomography, finite element approximation,
convergence analysis, Tikhonov regularization
\end{abstract}

\section{Introduction}\label{sect:intro}
Electrical impedance tomography (EIT) is a very popular diffusive imaging modality for
probing internal structures of the concerned object, by recovering its electrical conductivity/permittivity
distribution from voltage measurements on the boundary. One typical experimental setup
is as follows. One first attaches a set of metallic electrodes to the surface of the
object. Then one injects an electric current into the object through these electrodes,
which induces an electromagnetic field inside the object. Finally, one
measures the electric voltages on these electrodes. The procedure is often repeated several times with
different input currents in order to yield sufficient information on the sought-for conductivity
distribution. This physical process can be most accurately described by the complete
electrode model \cite{ChengIsaacsonNewellGisser:1989, SomersaloCheneyIsaacson:1992},
but the simpler continuum model is also frequently employed in simulation studies. The imaging modality has
attracted considerable interest in applications, e.g., in medical imaging, geophysical prospecting,
nondestructive evaluation and pneumatic oil pipeline conveying.

Due to its broad range of prospective applications, a large number of imaging algorithms have been
developed, and have delivered very encouraging reconstructions. These methods essentially utilize the idea of
regularization in diverse forms, in order to overcome the severe ill-posed nature of the imaging task,
and occasionally also the idea of (recursive) linearization to enable
computational tractability. We refer interested readers to the reviews \cite{Borcea:2002,AdGaLi:2011} and the recent references \cite{KaipioKolehmainenSomersaloVauhkonen:2000,SiltanenMuellerIsaacson:2000,RondiSantosa:2001,ChungChanTai:2005,
LukaschewitschMaassPidcock:2003,LechleiterRieder:2006,KnudsenLassasMuellerSiltanen:2009,JinKhanMaass:2010,HarrachSeo:2010}
for a very incomplete list of existing methods. One prominent idea underlying many popular EIT imaging techniques is
Tikhonov regularization with convex variational penalties, e.g., smoothness, total variation
and more recently sparsity constraints \cite{JinKhanMaass:2010,JinMaass:2012}.
These approaches have demonstrated very promising reconstructions for real data; see e.g.
\cite{KarhunenSeppanenLehikoinenMonteiroKaipio:2010,BorsicGrahamAdlerLionheart:2010,GehreKluth:2010} for some recent works.
However, the analysis of such formulations, surprisingly, has not received due
attention, despite its popularity in and relevance to practical applications. We are only aware
of very few works in this direction \cite{RondiSantosa:2001, Rondi:2008, JinMaass:2010}. In the
pioneering works \cite{RondiSantosa:2001,Rondi:2008}, Rondi and Santosa analyzed the existence,
stability and consistency of the Mumford-Shah/total variation formulation. Recently, Jin and Maass
\cite{JinMaass:2010} established the existence, stability, consistency and especially
convergence rates for the conventional Sobolev $H^1$-penalty and sparsity constraints.
These works provide partial theoretical justifications for the
practical usage of related imaging algorithms.

In practice, the numerical implementation of these imaging algorithms inevitably requires discretizing the forward model and the Tikhonov
functional into a finite-dimensional discrete problem. This is often achieved
by the finite element method, due to its versatility for handling general domain
geometries, spatially varying coefficients and solid theoretical underpinnings. However, the
solution to the discrete optimization problem is
different from that to the continuous Tikhonov model due to the discretization errors.
This raises several interesting questions on the discrete
approximations. One fundamental question is about the validity of the discretization procedure:
Does the discrete approximation converge to a solution to the continuous Tikhonov formulation as
the mesh size tends to zero? Since for inverse problems,
small errors in the data/model can possibly cause large deviations in the solution, it is unclear whether
the discretization error induces only small changes on the solution. Hence, the validity of the discretization strategy does
not follow automatically. To the best of our knowledge, the convergence issue
has not been addressed for EIT, despite the fact that such procedures are
routinely adopted in practice. 
However, we note that a closely related problem of compensating the effect of an
imprecise boundary on the resolution of numerical reconstructions has been carefully
studied by Kolehmainen et al \cite{KolehmainenLassasOla:2005,KolehmainenLassasOla:2007},
and several numerical algorithms were also developed.

In this work, we address the convergence issue of finite element approximations.
Specifically, we consider the complete electrode model, and discuss two popular imaging
techniques based on Tikhonov regularization with smoothness/total variation penalties.
These methods have been extensively used in simulation as well as real-world studies;
see \cite{DobsonSantosa:1994,VauhkonenVadasz:1998,VauhkonenVauhkonen:1999,RondiSantosa:2001,ChungChanTai:2005,Rondi:2008,BorsicGrahamAdlerLionheart:2010}
for a very incomplete list.  We shall distinguish two different
scenarios: polyhedral domains and convex smooth curved domains. The former allows exact
triangulation with simplicial elements, whereas the latter invokes domain approximations
and hence the analysis is much more involved. The simpler polyhedral case
serves to illustrate the main ideas of the proof. We remark that for practical applications,
curved domains are very common and their accurate discrete description is
essential for getting reasonable reconstructions, e.g., in imaging human body
\cite{BagshawListon:2003}, and hence it is of immense interest to analyze this case. The
rest of the paper is organized as follows. In Section \ref{sect:prelim}, we describe
the complete electrode model, collect some preliminary regularity results, and recall the
Tikhonov regularization formulation. Then the convergence analysis for polyhedral domains
is discussed in Section \ref{sect:polydom}, and for curved domains in Section
\ref{sect:curvdom}. Finally, some concluding remarks are given in Section \ref{sect:concl}. Throughout,
we shall use $C$ to denote a generic constant, which may differ at different
occurrences but does not depend on the mesh size $h$. We shall also use standard notation
from \cite{Evans:1992a} for the Sobolev spaces $W^{m,p}(\Omega)$.

\section{Preliminaries}\label{sect:prelim}

Here we recapitulate the mathematical formulation of the complete electrode model and
discuss its analytical properties. We shall also briefly describe the continuous Tikhonov
formulation.

\subsection{Complete electrode model}\label{subsect:cem}
According to the comparative experimental studies in
\cite{ChengIsaacsonNewellGisser:1989,SomersaloCheneyIsaacson:1992}, the complete
electrode model (CEM) is currently the most accurate mathematical model for reproducing
EIT experimental data. This is attributed to its faithful modeling of the physics: It takes into
account several important features of real EIT experiments, i.e., discrete nature of the
electrodes, shunting effect and contact impedance effect. We shall briefly recall the
mathematical model and its analytical properties in this part. These properties will be useful in the convergence analysis below.

Let $\Omega$ be an open and bounded domain in $\mathbb{R}^d$ ($d=2,3$) with a Lipschitz
continuous boundary $\Gamma$. We denote the set of electrodes by $\{e_l\}_{l=1}^L$, which
are open connected subsets of the boundary $\Gamma$ and disjoint from each other, i.e.,
$\bar{e}_i\cap\bar{e}_k=\emptyset$ if $i\neq k$. The applied current on the $l$th
electrode $e_l$ is denoted by $I_l$, and the current vector
$I=(I_1,\ldots,I_L)^\mathrm{t}$ satisfies $\sum_{l=1}^LI_l=0$ in view of the law of charge
conservation. Let the space $\mathbb{R}_\diamond^L$ be the subspace of the vector
space $\mathbb{R}^L$ with zero mean, then we have $I\in\mathbb{R}_\diamond^L$.
The electrode voltage $U=(U_1,\ldots,U_L)^\mathrm{t}$ is normalized such that
$U\in\mathbb{R}_\diamond^L$, which represents a grounding condition. Then the
mathematical model for the CEM reads: given the electrical conductivity $\sigma$, positive contact
impedances $\{z_l\}$ and an input current pattern $I\in\mathbb{R}_\diamond^L$, find the
potential $u\in H^1(\Omega)$ and electrode voltage $U\in\mathbb{R}_\diamond^L$ such that
\begin{equation}\label{eqn:cem}
\left\{\begin{aligned}
\begin{array}{ll}
-\nabla\cdot(\sigma\nabla u)=0 & \mbox{ in }\Omega,\\[1ex]
u+z_l\sigma\frac{\partial u}{\partial n}=U_l& \mbox{ on } e_l, l=1,2,\ldots,L,\\[1ex]
\int_{e_l}\sigma\frac{\partial u}{\partial n}ds =I_l& \mbox{ for } l=1,2,\ldots, L,\\ [1ex]
\sigma\frac{\partial u}{\partial n}=0&\mbox{ on } \Gamma\backslash\cup_{l=1}^Le_l.
\end{array}
\end{aligned}\right.
\end{equation}

The physical motivation behind the mathematical model \eqref{eqn:cem} is as follows \cite{SomersaloCheneyIsaacson:1992,CheneyIsaacsonNewell:1999}. The governing
equation is derived under a quasi-static low frequency assumption on the electromagnetic process.
The second line in system \eqref{eqn:cem} models the important contact impedance effect: When
injecting electrical currents into the object, a highly resistive thin layer forms at the
electrode-electrolyte interface (due to certain electrochemical processes), which causes
potential drops across the electrode-electrolyte interface according to Ohm's law. It also incorporates the shunting effect: electrodes
are perfect conductors, and hence the voltage is constant on each electrode. The third line
reflects the fact that the current injected through each electrode is completely confined
therein. In practice, the electrode voltage $U$ can be measured, which is then used for
reconstructing the conductivity distribution $\sigma$.

Due to physical constraint, the conductivity distribution is naturally bounded both from
below and from above by positive constants, hence we introduce the following admissible set
\begin{equation*}
  \mathcal{A}=\{\sigma:\ \lambda\leq \sigma(x)\leq \lambda^{-1}\mbox{ a.e. } x\in\Omega\},
\end{equation*}
for some $\lambda\in(0,1)$. We shall endow the set with $L^r(\Omega)$ norms, $r\geq1$.

We denote by $\mathbb{H}$ the product space $H^1(\Omega)\otimes
\mathbb{R}_\diamond^L$ with its norm defined by
\begin{equation*}
  \|(u,U)\|_{\mathbb{H}}^2 = \|u\|_{H^1(\Omega)}^2 + \|U\|_{\mathbb{R}^L}^2.
\end{equation*}

A convenient equivalent norm on the space $\mathbb{H}$ is given in the next lemma
\cite{SomersaloCheneyIsaacson:1992}.
\begin{lem}\label{lem:normequiv}
On the space $\mathbb{H}$, the norm $\|\cdot\|_\mathbb{H}$ is equivalent to the norm
$\|\cdot\|_{\mathbb{H},*}$ defined by
\begin{equation*}
  \|(u,U)\|_{\mathbb{H},*}^2 = \|\nabla u\|_{L^2(\Omega)}^2 + \sum_{l=1}^L\|u-U_l\|_{L^2(e_l)}^2.
\end{equation*}
\end{lem}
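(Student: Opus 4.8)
The plan is to establish the two-sided estimate $c\,\|(u,U)\|_{\mathbb{H}} \le \|(u,U)\|_{\mathbb{H},*} \le C\,\|(u,U)\|_{\mathbb{H}}$ by treating the two inequalities separately; the upper bound is elementary, whereas the lower bound requires a compactness argument. For the upper bound I would bound $\|\nabla u\|_{L^2(\Omega)}^2 \le \|u\|_{H^1(\Omega)}^2$ directly, and control each boundary term via $\|u-U_l\|_{L^2(e_l)}^2 \le 2\|u\|_{L^2(e_l)}^2 + 2|e_l|\,U_l^2$. The trace theorem gives $\|u\|_{L^2(e_l)} \le \|u\|_{L^2(\Gamma)} \le C\|u\|_{H^1(\Omega)}$, so summing over $l$ yields $\|(u,U)\|_{\mathbb{H},*}^2 \le C\|(u,U)\|_{\mathbb{H}}^2$.

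For the lower bound I would argue by contradiction. Assume no constant $c>0$ works; then there is a sequence $(u_n,U_n)\in\mathbb{H}$ with $\|(u_n,U_n)\|_{\mathbb{H}}=1$ and $\|(u_n,U_n)\|_{\mathbb{H},*}\to 0$. Boundedness in $H^1(\Omega)\times\mathbb{R}_\diamond^L$ gives, up to a subsequence, weak convergence $(u_n,U_n)\rightharpoonup(u,U)$ in $\mathbb{H}$; by the Rellich--Kondrachov theorem $u_n\to u$ strongly in $L^2(\Omega)$ and, by compactness of the trace operator on the Lipschitz domain $\Omega$, strongly in $L^2(\Gamma)$, while $U_n\to U$ since $\mathbb{R}_\diamond^L$ is finite-dimensional. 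From $\|\nabla u_n\|_{L^2(\Omega)}\to 0$ and weak lower semicontinuity of the norm it follows that $\nabla u=0$, so $u$ is constant; combined with the strong $L^2(\Omega)$ convergence this upgrades $u_n\to u$ to strong convergence in $H^1(\Omega)$, whence $\|(u_n,U_n)\|_{\mathbb{H}}\to\|(u,U)\|_{\mathbb{H}}=1$.

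The crux is then to show $(u,U)=0$, which contradicts $\|(u,U)\|_{\mathbb{H}}=1$. Passing to the limit in $\|u_n-(U_n)_l\|_{L^2(e_l)}\to 0$ using the strong boundary convergence yields $u=U_l$ on $e_l$ for every $l$. Since $u$ is a single constant, each component $U_l$ equals that constant; but $U\in\mathbb{R}_\diamond^L$ has zero mean, forcing the common value to vanish, hence $u\equiv 0$ and $U=0$.

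I expect the essential obstacle to be exactly this last step: it is the grounding (zero-mean) constraint encoded in $\mathbb{R}_\diamond^L$ that eliminates the nontrivial constant directions in the null space of the seminorm $\|\cdot\|_{\mathbb{H},*}$ and makes it a genuine norm. Without it, constant potentials paired with the corresponding constant voltage vector would give $\|\cdot\|_{\mathbb{H},*}=0$ while $\|\cdot\|_{\mathbb{H}}\neq 0$, and no lower bound could hold. As a by-product the same reasoning shows $\|\cdot\|_{\mathbb{H},*}$ is definite, confirming that it is indeed a norm equivalent to $\|\cdot\|_{\mathbb{H}}$.
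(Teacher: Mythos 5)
Your proposal is correct. Note that the paper itself gives no proof of this lemma; it is stated as a known result with a citation to Somersalo, Cheney and Isaacson, and your compactness-and-contradiction argument is essentially the standard proof from that reference: the upper bound via the trace theorem, and the lower bound by extracting a weakly convergent sequence, using Rellich--Kondrachov and compactness of the trace to pass to the limit, identifying the limit as a constant, and invoking the zero-mean constraint on $\mathbb{R}_\diamond^L$ to force the constant to vanish. You have also correctly isolated the role of the grounding condition: without it the pair $(u,U)=(c,(c,\dots,c)^{\mathrm{t}})$ would lie in the kernel of $\|\cdot\|_{\mathbb{H},*}$ and the equivalence would fail. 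The only cost of this soft argument, as you implicitly acknowledge, is that it yields no explicit constant, but none is needed anywhere in the paper.
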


The weak formulation of the model \eqref{eqn:cem} reads: find $(u,U)\in \mathbb{H}$ such
that
\begin{equation}\label{eqn:cemweakform}
  \int_\Omega\sigma \nabla u \cdot \nabla v dx +\sum_{l=1}^Lz_l^{-1}\int_{e_l}(u-U_l)(v-V_l)ds =
  \sum_{l=1}^LI_lV_l\quad \forall (v,V)\in \mathbb{H}.
\end{equation}

Now for any fixed $\sigma\in\mathcal{A}$, the existence and uniqueness of a solution $(u,U)\equiv(u(\sigma),U(\sigma))
\in\mathbb{H}$ to the weak formulation \eqref{eqn:cemweakform} follows directly from
Lemma \ref{lem:normequiv} and Lax-Milgram theorem, and further, it
depends continuously on the input current pattern $I$ \cite{SomersaloCheneyIsaacson:1992}. The
next result presents an improved regularity of the solution $(u(\sigma),U(\sigma))$ to
system \eqref{eqn:cem}. It can be derived from the Neumann analogue \cite{Groger:1989,GallouetMonier:1999} of Meyers'
celebrated gradient estimates \cite{Meyers:1963}; see \cite{JinMaass:2010} for details.
\begin{thm}\label{thm:cemreg}
Let $\lambda\in(0,1)$, and $\sigma(x)\in[\lambda,\lambda^{-1}]$ almost everywhere. Then
there exists a constant $Q(\lambda,d)>2$, which depends only on the domain $\Omega$, the spatial
dimension $d$ and the constant $\lambda$, such that for any $q\in(2,Q(\lambda,d))$, the solution
$(u(\sigma),U(\sigma))\in \mathbb{H}$ to system \eqref{eqn:cem} satisfies the following estimate
\begin{equation*}
  \|u\|_{W^{1,q}(\Omega)} \leq C\|I\|,
\end{equation*}
where the constant $C=C(\Omega,d,\lambda,q)$.
\end{thm}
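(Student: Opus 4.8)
The plan is to read Theorem~\ref{thm:cemreg} as a Meyers-type higher-integrability statement for $\nabla u$, and to obtain it from the self-improving property of a reverse H\"older inequality via Gehring's lemma, in the form adapted to mixed Robin/Neumann boundary conditions by Gr\"oger \cite{Groger:1989} and Gallou\"et--Monier \cite{GallouetMonier:1999}. The starting point is the baseline energy estimate. Since $\sigma\in\mathcal{A}$ renders the bilinear form in \eqref{eqn:cemweakform} bounded and coercive on $\mathbb{H}$, uniformly in $\sigma\in\mathcal{A}$ with constants depending only on $\lambda$, $\{z_l\}$ and $\Omega$, Lax--Milgram together with the norm equivalence of Lemma~\ref{lem:normequiv} yields $\|(u,U)\|_{\mathbb{H}}\leq C\|I\|$; in particular $\|u\|_{H^1(\Omega)}\leq C\|I\|$ and $|U_l|\leq C\|I\|$. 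This fixes the $q=2$ endpoint, and the task is to push the integrability exponent of $\nabla u$ slightly past $2$.

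First I would establish a Caccioppoli (reverse Poincar\'e) inequality on balls. For an interior ball $B_r\subset B_R\subset\Omega$, test \eqref{eqn:cemweakform} with $(v,V)=(\eta^2(u-c),0)$, where $\eta$ is a cutoff equal to $1$ on $B_r$, supported in $B_R$, with $|\nabla\eta|\leq C(R-r)^{-1}$, and $c$ is the mean of $u$ over $B_R$. Ellipticity of $\sigma$ and Young's inequality then give
\begin{equation*}
\int_{B_r}|\nabla u|^2\,dx\leq \frac{C}{(R-r)^2}\int_{B_R}|u-c|^2\,dx.
\end{equation*}
For a ball centered at a boundary point the same test function is used, now producing the electrode integrals $\sum_l z_l^{-1}\int_{e_l\cap B_R}(u-U_l)\,\eta^2(u-c)\,ds$; the Neumann part of $\Gamma$ contributes nothing, and since each $|U_l|\leq C\|I\|$ these boundary terms are of lower order and can be absorbed. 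Inserting the Sobolev--Poincar\'e inequality $\|u-c\|_{L^2(B_R)}\leq CR\,\|\nabla u\|_{L^{p_*}(B_R)}$ with $p_*=2d/(d+2)<2$ converts the Caccioppoli estimate into a reverse H\"older inequality
\begin{equation*}
\left(\frac{1}{|B_r|}\int_{B_r}|\nabla u|^2\,dx\right)^{1/2}\leq C\left(\frac{1}{|B_R|}\int_{B_R}|\nabla u|^{p_*}\,dx\right)^{1/p_*}+(\text{lower order}),
\end{equation*}
whose constant depends only on $d$, $\lambda$, and the Lipschitz character of $\Omega$.

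The final step is Gehring's lemma: a reverse H\"older inequality of the above type self-improves, so there exists an exponent $Q=Q(\lambda,d,\Omega)>2$ such that $\nabla u\in L^q(\Omega)$ for every $q\in(2,Q)$, with $\|\nabla u\|_{L^q(\Omega)}\leq C\|\nabla u\|_{L^2(\Omega)}\leq C\|I\|$. Together with $\|u\|_{L^q(\Omega)}\leq C\|u\|_{H^1(\Omega)}\leq C\|I\|$ (valid for $q$ near $2$ by Sobolev embedding on the bounded domain $\Omega$), this yields the asserted bound $\|u\|_{W^{1,q}(\Omega)}\leq C\|I\|$. Equivalently, once the CEM is recast as a mixed second-order problem with bounded measurable coefficients, the statement is a direct consequence of \cite{Groger:1989,GallouetMonier:1999}, which is the route taken in \cite{JinMaass:2010}.

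I expect the main obstacle to be the boundary analysis at the electrode edges $\partial e_l$, where the boundary condition changes type, from Robin on $e_l$ to homogeneous Neumann on $\Gamma\setminus\cup_l e_l$. One must establish the boundary Caccioppoli estimate uniformly across this change of type and, crucially, verify that the reverse-H\"older constant, and hence the exponent $Q$, depends only on $\lambda$, $d$, and the geometry of $\Omega$ and $\{e_l\}$, never on the particular solution. The coupling of $u$ to the discrete voltages $U_l$ is, by contrast, benign, since the energy estimate controls each $|U_l|$ a priori and reduces the electrode integrals to lower-order terms; the genuinely delicate point is the geometric boundary regularity at the type-change interface.
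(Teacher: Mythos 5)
Your proposal is correct and takes essentially the same route as the paper, which offers no self-contained proof but derives the theorem by citing the Neumann/Robin analogue \cite{Groger:1989,GallouetMonier:1999} of Meyers' gradient estimates \cite{Meyers:1963} with details in \cite{JinMaass:2010} --- precisely the references you invoke, and whose underlying Caccioppoli/reverse-H\"older/Gehring mechanism you sketch accurately. Your only overstatement is the worry about a ``change of type'' at $\partial e_l$: both the Robin condition on $e_l$ and the Neumann condition elsewhere are natural boundary conditions encoded weakly in \eqref{eqn:cemweakform}, so the electrode edges pose no Dirichlet--Neumann-type difficulty and the Robin terms are genuinely lower order, as you note.
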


\begin{rmk}
The parameter $Q$ depends on the regularity of the domain $\Omega$. If the domain $\Omega$ is of class
$C^1$, then $Q(\lambda,d)\rightarrow\infty$ as $\lambda\rightarrow1$ \cite{Groger:1989}. For a general
Lipschitz domain, e.g., polyhedrons, there also always exists some $Q(\lambda,d)>2$
for any $\lambda<1$, cf. \cite[Sect. 5]{Groger:1989}.
\end{rmk}


The next result shows that the parameter-to-state map $\sigma\to(u(\sigma),U(\sigma))\in
\mathbb{H}$ is continuous with respect to $L^r(\Omega)$ topology on the
admissible set $\mathcal{A}$.
\begin{lem}\label{lem:contpoly}
Let the sequence $\{\sigma_n\}\subset \mathcal{A}$ converge to some
$\sigma^\ast\in\mathcal{A}$ in $L^r(\Omega),\ r\geq1$. Then the sequence of the solutions
$\{(u(\sigma_n),U(\sigma_n))\}$ converges to $(u(\sigma^\ast),U(\sigma^\ast))$ in $\mathbb{H}$.
\end{lem}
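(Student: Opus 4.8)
The plan is to establish the convergence directly through an energy estimate, exploiting the coercivity granted by Lemma \ref{lem:normequiv} together with the improved integrability of the state furnished by Theorem \ref{thm:cemreg}; no compactness argument or subsequence extraction is needed. To fix notation, write $(u_n,U_n)=(u(\sigma_n),U(\sigma_n))$ and $(u^\ast,U^\ast)=(u(\sigma^\ast),U(\sigma^\ast))$, and for $\sigma\in\mathcal{A}$ denote by
\[
a_\sigma((u,U),(v,V))=\int_\Omega\sigma\nabla u\cdot\nabla v\,dx+\sum_{l=1}^Lz_l^{-1}\int_{e_l}(u-U_l)(v-V_l)\,ds
\]
the bilinear form associated with \eqref{eqn:cemweakform}. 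Since $\sigma^\ast\geq\lambda$ on $\Omega$, the form $a_{\sigma^\ast}$ is coercive on $\mathbb{H}$, with a constant depending only on $\lambda$, $\{z_l\}$ and the equivalence constant of Lemma \ref{lem:normequiv}, but not on the detailed profile of $\sigma^\ast$. I would therefore bound $a_{\sigma^\ast}(w_n,w_n)$ from above, where $w_n:=(u_n-u^\ast,U_n-U^\ast)\in\mathbb{H}$, and invoke coercivity to conclude $\|w_n\|_{\mathbb{H}}\to0$.

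The key step is an exact error representation obtained by subtracting the two weak formulations. Testing \eqref{eqn:cemweakform} for $\sigma_n$ and for $\sigma^\ast$ with the common test function $w_n$ gives $a_{\sigma_n}((u_n,U_n),w_n)=\sum_l I_l(w_n)_l=a_{\sigma^\ast}((u^\ast,U^\ast),w_n)$, where $(w_n)_l$ denotes the $l$th voltage component of $w_n$. Writing $a_{\sigma^\ast}((u_n,U_n),w_n)=a_{\sigma_n}((u_n,U_n),w_n)+\int_\Omega(\sigma^\ast-\sigma_n)\nabla u_n\cdot\nabla(u_n-u^\ast)\,dx$ and using the two identities above, the current-dependent terms cancel, leaving the clean representation
\[
a_{\sigma^\ast}(w_n,w_n)=\int_\Omega(\sigma^\ast-\sigma_n)\,\nabla u_n\cdot\nabla(u_n-u^\ast)\,dx.
\]
It then remains to show that the right-hand side tends to zero faster than $\|w_n\|_{\mathbb{H}}$.

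The remaining ingredient, and the main obstacle, is to estimate this residual without access to $L^\infty$-convergence of $\sigma_n$; the crude bound that pulls out $\|\sigma^\ast-\sigma_n\|_{L^\infty}$ is unavailable, since only $L^r$-convergence is assumed. This is exactly where Theorem \ref{thm:cemreg} enters: fixing any $q\in(2,Q(\lambda,d))$, it yields $\|\nabla u_n\|_{L^q(\Omega)}\leq C\|I\|$ uniformly in $n$, creating room in the Hölder exponents. Applying the three-factor Hölder inequality with exponents $s,q,2$ satisfying $\tfrac1s+\tfrac1q+\tfrac12=1$, i.e. $s=\tfrac{2q}{q-2}<\infty$, gives
\[
\Big|\int_\Omega(\sigma^\ast-\sigma_n)\nabla u_n\cdot\nabla(u_n-u^\ast)\,dx\Big|\leq\|\sigma^\ast-\sigma_n\|_{L^s(\Omega)}\,\|\nabla u_n\|_{L^q(\Omega)}\,\|\nabla(u_n-u^\ast)\|_{L^2(\Omega)}.
\]
Since $\|\nabla(u_n-u^\ast)\|_{L^2(\Omega)}\leq\|w_n\|_{\mathbb{H}}$, combining with coercivity and dividing by $\|w_n\|_{\mathbb{H}}$ leaves $\|w_n\|_{\mathbb{H}}\leq C\|I\|\,\|\sigma^\ast-\sigma_n\|_{L^s(\Omega)}$. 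Finally I would upgrade the hypothesis $\|\sigma^\ast-\sigma_n\|_{L^r}\to0$ to $\|\sigma^\ast-\sigma_n\|_{L^s}\to0$: as every element of $\mathcal{A}$ is bounded by $\lambda^{-1}$, we have $\|\sigma^\ast-\sigma_n\|_{L^\infty}\leq2\lambda^{-1}$, so for $s\geq r$ the interpolation inequality $\|f\|_{L^s}\leq\|f\|_{L^\infty}^{1-r/s}\|f\|_{L^r}^{r/s}$ transfers the convergence, while for $s<r$ the bounded-domain embedding $L^r\hookrightarrow L^s$ does so directly. This forces $\|w_n\|_{\mathbb{H}}\to0$, which is the assertion; note that the argument in fact delivers the quantitative bound $\|w_n\|_{\mathbb{H}}\leq C\|\sigma^\ast-\sigma_n\|_{L^s(\Omega)}$.
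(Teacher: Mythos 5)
Your proof is correct and follows essentially the same route as the paper: subtract the two weak formulations, test with the difference $(u_n-u^\ast,U_n-U^\ast)$, use the coercivity from Lemma \ref{lem:normequiv}, bound the residual by the three-factor H\"{o}lder inequality with the Meyers exponent $q$ from Theorem \ref{thm:cemreg}, and transfer the $L^r$-convergence to $L^s$ via the uniform $L^\infty$ bound on $\mathcal{A}$. The only (immaterial) difference is that you place the coercivity on $a_{\sigma^\ast}$ and the $W^{1,q}$ bound on $u_n$ (requiring the uniformity in $n$ that Theorem \ref{thm:cemreg} indeed provides), whereas the paper does the symmetric thing with $\sigma_n$ and $u^\ast$.
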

\begin{proof}
It follows from the weak formulations of the solutions $(u_n,U_n)\equiv
(u(\sigma_n),U(\sigma_n))$ and $(u^\ast,U^\ast)\equiv (u(\sigma^\ast),U(\sigma^\ast))$
(cf. \eqref{eqn:cemweakform}) that for all $(v,V)\in \mathbb{H}$
\begin{equation*}
  \int_\Omega\sigma_n\nabla(u^\ast-u_n)\cdot\nabla vdx +\int_\Omega(\sigma^\ast-\sigma_n)\nabla u^\ast\cdot\nabla vdx
  +\sum_{l=1}^Lz_l^{-1}\int_{e_l}(u^\ast-u_n-U^\ast_l+U_{n,l})(v-V_l)ds = 0.
\end{equation*}
Upon setting the test function $(v,V)$ to $(u^\ast-u_n,U^\ast-U_{n})\in\mathbb{H}$ in this
identity, and using Theorem \ref{thm:cemreg} and the generalized H\"{o}lder's
inequality, we derive
\begin{equation*}
  \begin{aligned}
     & \min(\lambda,\{z_l^{-1}\})\left(\|\nabla(u^\ast-u_n)\|_{L^2(\Omega)}^2 + \sum_{l=1}^L\|u^\ast-u_n-U_l^\ast+U_{n,l}\|_{L^2(e_l)}^2\right)\\
     \leq&\int_\Omega\sigma_n|\nabla(u^\ast-u_n)|^2dx+\sum_{l=1}^Lz_l^{-1}\int_{e_l}|u^\ast-u_n-U^\ast_l+U_{n,l}|^2ds\\
     =&-\int_\Omega(\sigma^\ast-\sigma_n)\nabla u^\ast\cdot\nabla(u^\ast-u_n)dx\\
     \leq&\|\sigma^\ast-\sigma_n\|_{L^p(\Omega)}\|\nabla u^\ast\|_{L^q(\Omega)}\|\nabla (u^\ast-u_n)\|_{L^2(\Omega)}\\
     \leq&\|\sigma^\ast-\sigma_n\|_{L^p(\Omega)}\|\nabla u^\ast\|_{L^q(\Omega)}\|(u^\ast-u_n,U^\ast-U_n)\|_{\mathbb{H}},
  \end{aligned}
\end{equation*}
where the exponent $q\in(2,Q(\lambda,d))$ is from Theorem \ref{thm:cemreg} and the
exponent $p$ satisfies $p^{-1}+q^{-1}=2^{-1}$. The desired assertion follows
immediately if $r\geq p$. In the case $r<p$, we exploit the $L^\infty(\Omega)$ bound of the admissible
set $\mathcal{A}$, i.e.,
\begin{equation*}
  \int_\Omega|\sigma^\ast-\sigma_n|^pdx\leq\lambda^{r-p}\int_\Omega|\sigma^\ast-\sigma_n|^{r}dx.
\end{equation*}
This together with Lemma \ref{lem:normequiv} shows the desired assertion.
\end{proof}

\subsection{Tikhonov regularization}\label{subsect:tikh}

The EIT inverse problem is to reconstruct an approximation to the physical conductivity
$\sigma^\dagger$ from noisy measurements $U^\delta$ of the electrode voltage
$U(\sigma^\dagger)$. It is severely ill-posed in the sense
that small errors in the data can lead to very large deviations in the
solutions. Therefore, some sort of regularization is beneficial, and it is
usually incorporated into EIT imaging algorithms, either implicitly or explicitly, in order to
yield stable yet accurate conductivity images. One of the most popular and
successful techniques is the standard Tikhonov regularization. It amounts to minimizing
the celebrated Tikhonov functional
\begin{equation}\label{eqn:tikh}
   \min_{\sigma\in\mathcal{A}}\left\{J(\sigma) = \tfrac{1}{2} \|U(\sigma)-U^\delta\|^2 + \alpha \Psi(\sigma)\right\},
\end{equation}
and then taking the minimizer, denoted by $\sigma_\alpha^\delta$, as an approximation to the
sought-for physical conductivity $\sigma^\dagger$.
Here the first term in the functional $J$ captures the information encapsulated in the data $U^\delta$.
For simplicity, we consider only one dataset, and the adaptation to multiple datasets is
straightforward. The scalar $\alpha>0$ is known as a regularization parameter, and controls
the tradeoff between the two terms. The second term $\Psi(\sigma)$ in the functional $J$ imposes a priori regularity
knowledge (smoothness) on the expected conductivity distributions.
Two most commonly used penalties are $\Psi(\sigma)=\tfrac{1}{2}\|\sigma\|_{H^1(\Omega)}^2$
and $\Psi(\sigma)=|\sigma|_\mathrm{TV(\Omega)}$ in the space of functions with bounded variation, i.e.,
\begin{equation*}
  \mathrm{BV}(\Omega)=\{v\in L^1(\Omega): \|v\|_{\mathrm{BV}(\Omega)}<\infty\},
\end{equation*}
where $\|v\|_{\mathrm{BV}(\Omega)}=\|v\|_{L^1(\Omega)}+|v|_{\mathrm{TV}(\Omega)}$ with the
total variation semi-norm $|v|_{\mathrm{TV}(\Omega)}=\int_\Omega |Dv|$ defined by
\begin{equation*}
  \int_\Omega|Dv| = \sup_{\substack{g\in (C_0^1(\Omega))^d\\
   |g(x)|\leq 1}}\int_\Omega v\mathrm{div}(g)dx.
\end{equation*}
Here the $H^1(\Omega)$-smoothness approach allows reconstructing conductivity distributions that are globally smooth,
which often retains well their main features, whereas the total variation approach is well suited
to discontinuous, especially piecewise constant, conductivity distributions \cite{RondiSantosa:2001,
ChungChanTai:2005}. These two approaches represent the most popular EIT imaging techniques in
practice. Theoretically, the existence and consistency of the continuous
model \eqref{eqn:tikh} for the total variation and smoothness penalty have recently been
established in \cite{Rondi:2008} and \cite{JinMaass:2010}, respectively, where in the latter
work convergence rates for the smoothness and sparsity constraints were also provided.

One useful tool in the convergence analysis is the following embedding results \cite{Evans:1992a,AttouchButtazzoMichaille:2006}.
\begin{lem} \label{lem:embed}
The spaces $H^1(\Omega)$ and $\mathrm{BV}(\Omega)$ have the following embedding properties:
\begin{itemize}
  \item[(a)] The space $H^1(\Omega)$ embeds compactly into $L^{p}(\Omega)$ for $p<\infty$ if $d=2$ and $p<6$ if $d=3$.
  \item[(b)] The space $\mathrm{BV}(\Omega)$ embeds compactly into $L^{p}(\Omega)$ for $p<\frac{d}{d-1}$.
\end{itemize}
\end{lem}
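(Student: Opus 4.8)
The plan is to treat each embedding in two stages: first establish a continuous embedding into the critical Lebesgue space, then combine a single base compactness result with an interpolation inequality to cover the entire subcritical range. Throughout I rely on the boundedness of $\Omega$ and the Lipschitz regularity of $\Gamma$, which license the use of a Sobolev extension operator and the standard Sobolev inequalities.

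For part (a), I would first invoke the Gagliardo--Nirenberg--Sobolev inequality to obtain the continuous embedding $H^1(\Omega)\hookrightarrow L^{p^\ast}(\Omega)$, where the critical exponent is $p^\ast = 2d/(d-2) = 6$ when $d=3$, and where $H^1(\Omega)$ embeds continuously into every $L^q(\Omega)$, $q<\infty$, in the borderline case $d=2$. Next I would record the classical Rellich--Kondrachov theorem in its simplest form, namely that $H^1(\Omega)$ embeds compactly into $L^2(\Omega)$; this is where the real work sits, and I would establish it by extending a bounded sequence $\{u_n\}$ to $\mathbb{R}^d$, mollifying, and using the gradient bound $\|\tau_h u - u\|_{L^2}\le |h|\,\|\nabla u\|_{L^2}$ to verify the uniform translation continuity demanded by the Fr\'echet--Kolmogorov compactness criterion. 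Finally, given a bounded sequence in $H^1(\Omega)$, strong $L^2$-convergence of a subsequence together with the uniform $L^{p^\ast}$-bound yields strong convergence in every intermediate $L^p$ through the interpolation inequality $\|u_n-u_m\|_{L^p}\le \|u_n-u_m\|_{L^2}^{\theta}\,\|u_n-u_m\|_{L^{p^\ast}}^{1-\theta}$ for a suitable $\theta\in(0,1]$; the exponents $p<2$ are handled trivially using $|\Omega|<\infty$.

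For part (b), the strategy is identical in spirit. I would first use the Sobolev-type inequality for functions of bounded variation, $\|u\|_{L^{d/(d-1)}(\Omega)}\le C\|u\|_{\mathrm{BV}(\Omega)}$, to obtain the continuous embedding into the critical space $L^{d/(d-1)}(\Omega)$. The base compactness result is now that any bounded sequence in $\mathrm{BV}(\Omega)$ is precompact in $L^1(\Omega)$; since $\mathrm{BV}(\Omega)$ is not reflexive, weak compactness is unavailable, so I would instead approximate each $u_n$ by a mollification $u_n^\varepsilon\in C^\infty$ satisfying $\|\nabla u_n^\varepsilon\|_{L^1}\le |u_n|_{\mathrm{TV}(\Omega)}$, and again appeal to uniform translation estimates and Fr\'echet--Kolmogorov. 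Interpolating strong $L^1$-convergence against the uniform $L^{d/(d-1)}$-bound then delivers strong convergence in $L^p(\Omega)$ for every $p<d/(d-1)$.

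The main obstacle in both parts is the base compactness step rather than the interpolation, which is routine. For $H^1$ the delicate point is the borderline dimension $d=2$, where the formula $p^\ast = 2d/(d-2)$ degenerates and the continuous embedding into all $L^q$, $q<\infty$, must be argued by a separate estimate. For $\mathrm{BV}$ the difficulty is the absence of reflexivity, which forces one to replace the weak-compactness shortcut available for $H^1$ by an explicit mollification-plus-translation argument; controlling the approximation uniformly in $n$ while keeping the total variation bounded is the technical heart of the proof. Since both results are entirely classical, in the paper it suffices to cite \cite{Evans:1992a,AttouchButtazzoMichaille:2006} for the complete arguments.
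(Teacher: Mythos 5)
Your proposal is correct; the paper offers no proof of its own for this lemma and simply cites \cite{Evans:1992a,AttouchButtazzoMichaille:2006}, and your argument is precisely the classical Rellich--Kondrachov/BV-compactness proof found in those references (continuous embedding into the critical Lebesgue space, base compactness in $L^2$ resp.\ $L^1$ via extension, mollification and Fr\'echet--Kolmogorov, then interpolation over the subcritical range). Nothing further is needed.
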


A direct consequence of Lemmas \ref{lem:contpoly} and \ref{lem:embed} is the weak
continuity. The concept of weak convergence in the $\mathrm{BV}$ space used below
follows \cite[Definition 10.1.2]{AttouchButtazzoMichaille:2006}.
\begin{cor}\label{cor:weakcont}
Let the sequence $\{\sigma_n\}\subset \mathcal{A}$ converge to some
$\sigma^\ast\in\mathcal{A}$ weakly in either $H^1(\Omega)$ or
$\mathrm{BV}(\Omega)$. Then the sequence of the solutions
$\{(u(\sigma_n),U(\sigma_n))\}$ converges strongly to $(u(\sigma^\ast),U(\sigma^\ast))$
in $\mathbb{H}$.
\end{cor}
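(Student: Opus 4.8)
The plan is to deduce this weak continuity from the strong $L^r$-continuity already furnished by Lemma \ref{lem:contpoly}, using the compact embeddings of Lemma \ref{lem:embed} to upgrade the assumed weak convergence of $\{\sigma_n\}$ into strong convergence in a suitable Lebesgue space. The guiding principle is that a compact embedding carries weakly convergent sequences to norm-convergent ones, so that the hypotheses of Corollary \ref{cor:weakcont} reduce precisely to those of Lemma \ref{lem:contpoly}.

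Concretely, suppose first that $\sigma_n \rightharpoonup \sigma^\ast$ weakly in $H^1(\Omega)$. By Lemma \ref{lem:embed}(a) the embedding $H^1(\Omega)\hookrightarrow L^r(\Omega)$ is compact for some $r>1$ (e.g. $r=2$), and since a compact operator sends weakly convergent sequences to strongly convergent ones, we obtain $\sigma_n\to\sigma^\ast$ strongly in $L^r(\Omega)$. Suppose instead that $\sigma_n\to\sigma^\ast$ weakly in $\mathrm{BV}(\Omega)$ in the sense of \cite[Definition 10.1.2]{AttouchButtazzoMichaille:2006}; then the sequence is bounded in $\mathrm{BV}(\Omega)$ and converges to $\sigma^\ast$ in $L^1(\Omega)$, so strong $L^r$-convergence holds already with $r=1$. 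Alternatively, arguing only from the uniform $\mathrm{BV}$-bound, Lemma \ref{lem:embed}(b) shows that every subsequence of $\{\sigma_n\}$ admits a further subsequence converging strongly in $L^r(\Omega)$ for $r<d/(d-1)$; the limit must coincide with $\sigma^\ast$, whence the whole sequence converges strongly in $L^r(\Omega)$.

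In either case we have produced an index $r\geq1$ with $\sigma_n\to\sigma^\ast$ strongly in $L^r(\Omega)$, while $\sigma^\ast\in\mathcal{A}$ by hypothesis. Applying Lemma \ref{lem:contpoly} with this $r$ then yields the strong convergence $(u(\sigma_n),U(\sigma_n))\to(u(\sigma^\ast),U(\sigma^\ast))$ in $\mathbb{H}$, which is the assertion. The proof is thus essentially a bookkeeping of exponents; the only step meriting mild attention is the $\mathrm{BV}$ case, where, because $\mathrm{BV}(\Omega)$ is non-reflexive and weak convergence is understood in the weak-$\ast$ sense, one passes from subsequential to full-sequence strong convergence via the uniqueness-of-limit argument indicated above.
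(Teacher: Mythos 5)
Your proposal is correct and is exactly the argument the paper intends: the corollary is stated there as ``a direct consequence of Lemmas \ref{lem:contpoly} and \ref{lem:embed}'', i.e.\ compactness of the embeddings upgrades the weak convergence of $\{\sigma_n\}$ to strong $L^r(\Omega)$ convergence, after which Lemma \ref{lem:contpoly} applies. Your extra care in the $\mathrm{BV}$ case (noting that weak convergence in the sense of \cite[Definition 10.1.2]{AttouchButtazzoMichaille:2006} already includes $L^1$ convergence, or alternatively passing through the subsequence/uniqueness-of-limit argument) is consistent with, and slightly more explicit than, what the paper records.
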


Corollary \ref{cor:weakcont} implies that the forward parameter-to-state map is weakly sequentially closed, and in
view of the classical nonlinear Tikhonov regularization theory \cite{EnglKunischNeubauer:1989}, this
directly yields the the existence of a minimizer and its stability. We will also need the following
density result for the space $\mathrm{BV}(\Omega)$; see \cite[Lemma 3.3]{ChenZou:1999} for a proof:
\begin{lem}\label{lem:density}
Let $g\in \mathrm{BV}(\Omega)$. Then for any $\epsilon>0$, there exists a function $g_\epsilon\in C^\infty(\overline{\Omega})$ such that
\begin{equation*}
  \int_\Omega |g-g_\epsilon|dx<\epsilon,\quad \left|\int_\Omega|\nabla g_\epsilon|dx-\int_\Omega|Dg|\right|<\epsilon.
\end{equation*}
\end{lem}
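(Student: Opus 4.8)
The plan is to establish the \emph{strict density} of smooth functions in $\mathrm{BV}(\Omega)$: it suffices to produce a sequence $\{g_k\}\subset C^\infty(\overline{\Omega})$ with $g_k\to g$ in $L^1(\Omega)$ and $\int_\Omega|\nabla g_k|\,dx\to\int_\Omega|Dg|$, since then any sufficiently large $k$ furnishes the desired $g_\epsilon$. One of the two convergences is essentially free: the total variation seminorm $v\mapsto\int_\Omega|Dv|$ is lower semicontinuous with respect to $L^1(\Omega)$ convergence, which is immediate from its definition as a supremum over test fields $\phi\in(C_0^1(\Omega))^d$ with $|\phi|\le1$, because each functional $v\mapsto\int_\Omega v\,\mathrm{div}(\phi)\,dx$ is $L^1$-continuous and a supremum of continuous functionals is lower semicontinuous. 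Hence any $L^1$-approximating sequence automatically satisfies $\int_\Omega|Dg|\le\liminf_k\int_\Omega|\nabla g_k|\,dx$, and the whole difficulty is to build smooth approximants that do \emph{not overshoot}, i.e.\ $\limsup_k\int_\Omega|\nabla g_k|\,dx\le\int_\Omega|Dg|$; combined with lower semicontinuity this yields the exact limit.

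First I would treat the construction away from $\Gamma$ by the classical mollification-with-partition-of-unity argument. Exhaust $\Omega$ by open sets $\Omega_1\Subset\Omega_2\Subset\cdots$ with $\bigcup_j\Omega_j=\Omega$, and, using that $\int_\Omega|Dg|<\infty$, fix the exhaustion so that the total variation carried by the outermost layer $\Omega\setminus\Omega_1$ is below a prescribed tolerance. Choose a locally finite open cover $\{V_j\}$ adapted to the exhaustion, a subordinate smooth partition of unity $\{\zeta_j\}$, and mollification radii $\rho_j$ small enough that each $\eta_{\rho_j}*(g\zeta_j)$ is supported in $V_j$ and $L^1$-close to $g\zeta_j$, and likewise $\eta_{\rho_j}*(g\nabla\zeta_j)$ close to $g\nabla\zeta_j$. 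Setting $g_k=\sum_j\eta_{\rho_j}*(g\zeta_j)\in C^\infty(\Omega)$ and differentiating via the product rule gives $\nabla g_k=\sum_j\eta_{\rho_j}*(\zeta_j Dg)+\sum_j\eta_{\rho_j}*(g\nabla\zeta_j)$; the identity $\sum_j\nabla\zeta_j\equiv0$ turns the second sum into a telescoping error controlled by the $\rho_j$, while the controlled overlap of the $V_j$ bounds the first by $\int_\Omega|Dg|+C\epsilon$. This yields both $g_k\to g$ in $L^1(\Omega)$ and the upper bound on the total variation.

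The genuine obstacle is upgrading $C^\infty(\Omega)$ to $C^\infty(\overline{\Omega})$, i.e.\ smoothness up to the boundary, without spoiling the total-variation bound. Global mollification would deliver boundary smoothness automatically but requires $g$ to be defined on a neighborhood of $\overline{\Omega}$, and any $\mathrm{BV}$-extension $\tilde g$ across $\Gamma$ generically creates a singular derivative concentrated on $\Gamma$ (the jump between the interior and exterior traces), so that $\int_\Omega|\nabla(\eta_\rho*\tilde g)|\,dx$ would converge to $\int_\Omega|Dg|+|D\tilde g|(\Gamma)$ and overshoot. The remedy, exploiting that $\Gamma$ is Lipschitz, is to combine the interior construction with a boundary layer handled by a \emph{push-inward} deformation: cover $\Gamma$ by finitely many charts in which $\Omega$ is the region lying above a Lipschitz graph, and in each chart translate $g$ slightly in the transverse direction, $\tilde g_t(x',x_d)=g(x',x_d+t)$, so that it becomes defined on a full neighborhood of the corresponding closed boundary piece. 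Such translations converge to $g$ strictly in $\mathrm{BV}$ as $t\to0$ (they are $L^1$-continuous and preserve the total variation exactly), so mollifying them at a radius $\rho\ll t$ produces functions smooth up to $\Gamma$ with no extra gradient mass.

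Gluing the boundary pieces to the interior approximant through the partition of unity, and sending the deformation and mollification parameters to zero together, yields $g_k\in C^\infty(\overline{\Omega})$ with the two required convergences; restricting to a single large $k$ gives $g_\epsilon$. I expect the bookkeeping in this gluing to be the most delicate point: one must choose the transverse translation directions consistently on chart overlaps, ensure the partition-of-unity cross terms $\eta_{\rho}*(g\nabla\zeta_j)$ remain negligible, and verify that the strictly-convergent boundary pieces and the interior piece do not jointly overcount the total variation in the overlap annulus. It is precisely here that the Lipschitz geometry of $\Gamma$ enters quantitatively, through the uniform graph representation and the bounded overlap of the covering.
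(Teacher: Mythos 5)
The paper offers no proof of this lemma at all: it simply cites \cite[Lemma 3.3]{ChenZou:1999}, so there is no in-paper argument to compare against, and your sketch must be judged on its own. What you describe is the standard ``strict density'' proof for Lipschitz domains, and its architecture is sound: lower semicontinuity of $v\mapsto\int_\Omega|Dv|$ under $L^1$ convergence supplies the $\liminf$ inequality for free; the Anzellotti--Giaquinta partition-of-unity mollification handles the interior; and the genuine difficulty --- smoothness up to $\overline{\Omega}$ without depositing extra gradient mass on $\Gamma$ --- is correctly identified and correctly resolved by inward translations in Lipschitz graph charts followed by mollification at a radius much smaller than the translation parameter. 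Two of the loose ends you flag deserve comment. First, the worry about ``jointly overcounting the total variation in the overlap annulus'' dissolves if you bound the principal sum by $\sum_j\|\eta_{\rho_j}*(\zeta_j\,Dg)\|_{L^1}\le\sum_j\int_\Omega\zeta_j\,d|Dg|=\int_\Omega|Dg|$, using $|\eta_\rho*\mu|\le\eta_\rho*|\mu|$ and $\sum_j\zeta_j\equiv1$; no bounded-overlap constant ever enters, so the cover need only be locally finite. Second, ``translations preserve the total variation exactly'' is not literally true, since the underlying domain shifts with the function; what you actually need, and what holds, is that the thin layer of $\Omega$ not covered by the translated chart carries $|Dg|$-mass tending to zero as $t\to0$, so the translated pieces contribute total variation converging to the correct amount. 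Finally, you dismiss the extension route a little too quickly: for a Lipschitz domain the BV extension theorem can be arranged to produce $Eg\in\mathrm{BV}(\mathbb{R}^d)$ with $|D(Eg)|(\Gamma)=0$, after which a single global mollification finishes the proof in a few lines; your chart-by-chart translation is essentially that construction unpacked, since the extension theorem is itself proved by such local translations. Either route is acceptable; with the repairs above your outline is a valid proof.
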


In order to obtain conductivity images from a computer implementation of the Tiknonov approach,
one necessarily needs to discretize the forward
problem \eqref{eqn:cem} and the Tikhonov functional \eqref{eqn:tikh} by restricting the
admissible conductivities to a certain finite-dimensional subspace. In practice, this is
usually achieved by the finite element method due to its solid theoretical foundation and
versatility for handling general domain geometries, as often occur in practical
situations. The main goal of the present study is to provide theoretical justifications
for such procedures. We shall discuss two scenarios separately: polyhedral domains and
(convex) smooth curved domains in Sections \ref{sect:polydom} and \ref{sect:curvdom}, respectively.

\section{Convergence for polyhedral domains}\label{sect:polydom}

In this part, we discuss the case of polyhedral domains. Let $\Omega$ be an open bounded
polyhedral domain. To discretize the imaging problem, we first
triangulate the domain $\Omega$. Let $\mathcal{T}_h$ be a family of shape regular,
quasi-uniform triangulation of the domain $\Omega$, with the mesh consisting of
simplicial elements. The mesh size (the radius of the smallest circle/sphere
circumscribing each element) of the mesh $\mathcal{T}_h$ is denoted by $h$. On the mesh
$\mathcal{T}_h$, we define a continuous piecewise linear finite element space
\begin{equation*}
   V_h = \left\{v\in C(\overline{\Omega}): v|_T\in P_1(T)\ \forall T\in\mathcal{T}_h\right\},
\end{equation*}
where the space $P_1(T)$ consists of all linear functions on the element $T$. The same space
$V_h$ is used for approximating both the potential $u$ and the conductivity $\sigma$. Nonetheless,
we observe that in practice, it is possible to employ different meshes for the potential and the conductivity,
for which the analysis below remains valid upon minor modifications. The
use of piecewise linear finite elements is especially popular since the data
(conductivity and boundary conditions) has only limited regularity.

With the space $V_h$, we can define two important operators: the canonical nodal
interpolation operator $\mathcal{I}_h: C(\overline{\Omega})\rightarrow V_h$ and the $H^1$-projection
operator $\mathcal{R}_h: H^1(\Omega)\rightarrow V_h$ defined by
\begin{equation*}
  \int_\Omega \nabla \mathcal{R}_hu\cdot\nabla vdx +\int_\Omega \mathcal{R}_huvdx
    = \int_\Omega \nabla u\cdot\nabla vdx + \int_\Omega uvdx\quad \forall v\in V_h.
\end{equation*}
It is well known \cite{Ciarlet:2002} that the operators $\mathcal{I}_h$ and
$\mathcal{R}_h$ satisfy for any $p>d$
\begin{equation}\label{eqn:basicest}
   \begin{aligned}
     \lim_{h\rightarrow0}\|v-\mathcal{I}_hv\|_{L^\infty(\Omega)}&=0\quad \forall v\in W^{1,p}(\Omega),\\
     \lim_{h\rightarrow0}\|v-\mathcal{R}_hv\|_{H^1(\Omega)}&=0\quad \forall v\in H^1(\Omega).
   \end{aligned}
\end{equation}

Now we can describe the finite element approximation scheme. First, we approximate the
forward map $(u(\sigma),U(\sigma))\in \mathbb{H}$ by $(u_h,U_h)\equiv(u_h(\sigma_h),
U_h(\sigma_h))\in \mathbb{H}_h\equiv V_h\otimes\mathbb{R}_\diamond^L$ defined by
\begin{equation}\label{eqn:dispoly}
  \int_\Omega \sigma_h\nabla u_h\cdot\nabla v_h dx +\sum_{l=1}^Lz_l^{-1}\int_{e_l}(u_h-U_{h,l})(v_h-V_l)ds = \sum_{l=1}^LI_lV_l\quad \forall
  (v_h,V)\in V_h\otimes \mathbb{R}_\diamond^L,
\end{equation}
where the (discretized) conductivity $\sigma_h$ lies in the discrete admissible set
\begin{equation*}
  \mathcal{A}_h=\{\sigma_h\in V_h:\lambda\leq \sigma_h\leq \lambda^{-1}\ \mbox{ a.e. } \Omega\}=\mathcal{A}\cap V_h.
\end{equation*}
Then the discrete optimization problem reads
\begin{equation}\label{eqn:discopt}
  \min_{\sigma_h\in\mathcal{A}_h}\left\{J_h(\sigma_h) = \tfrac{1}{2}\|U_h(\sigma_h)-U^\delta\|^2 + \alpha\Psi(\sigma_h)\right\},
\end{equation}
where the discrete penalty functional $\Psi(\sigma_h)$ is given by
$\Psi(\sigma_h)= \tfrac{1}{2}\|\sigma_h\|_{H^1(\Omega)}^2$ and $\Psi(\sigma_h) = |\sigma_h|_{\mathrm{TV}(\Omega)}$
for the smoothness and total variation penalty, respectively.

\begin{rmk}
In practice, even though the domain $\Omega$ is polyhedral, the electrode surfaces $\{e_l\}$ can still be curved,
and this calls for the approximation of the surfaces $\{e_l\}$ by polyhedral surfaces in the discrete variational
formulation. However, we defer relevant discussions to Section \ref{sect:curvdom}.
\end{rmk}

We observe that, due to the use of linear finite elements, the box constraint on
$\sigma_h$ reduces to that on the nodal values, which greatly facilitates the solution of the resulting
discrete optimization problem. Since the set $\mathcal{A}_h$ is finite dimensional
and uniformly bounded, the compactness and the norm equivalence of finite-dimensional spaces
immediately yields the existence of a minimizer $\sigma_h^\ast\in\mathcal{A}_h$ to the
discrete functional $J_h(\sigma_h)$ over the discrete admissible set $\mathcal{A}_h$ for
any $h>0$.

One basic question is whether the sequence $\{\sigma_h^\ast\}$ of discrete minimizers
converges to a minimizer of the continuous functional $J(\sigma)$ as the mesh size $h$
tends to zero. This issue is concerned with the validity of the approximation procedure, and
hence it is of significant practical interest.
To this end, we shall first establish a discrete analogue of Lemma \ref{lem:contpoly} on
the approximation property of the discrete parameter-to-state map $\sigma_h\mapsto (u_h(\sigma_h),U_h(\sigma_h))$ to the
continuous counterpart $\sigma\mapsto (u(\sigma),U(\sigma))$. The lemma will play a crucial role
in establishing the desired convergence for polyhedral domains.
\begin{lem}\label{lem:polyhed}
Let the sequence $\{\sigma_h\}_{h>0}\subset \mathcal{A}_h\subset\mathcal{A}$ converge
in $L^r(\Omega), \ r\geq 1$, to some $\sigma\in\mathcal{A}$ as $h$ tends to zero. Then the sequence
of finite element approximations $\{(u_h(\sigma_h),U_h(\sigma_h))\}_{h>0}$ converges
to $(u(\sigma),U(\sigma))$ in $\mathbb{H}$ as $h$ tends to zero.
\end{lem}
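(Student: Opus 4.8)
The plan is to adapt the continuity argument from Lemma~\ref{lem:contpoly} to the discrete setting, with the key additional ingredient being the convergence of the discrete solution $(u_h(\sigma),U_h(\sigma))$ to the continuous solution $(u(\sigma),U(\sigma))$ for a \emph{fixed} conductivity $\sigma$. A natural strategy is to split the error via a triangle inequality into two pieces:
\[
 \|(u_h(\sigma_h),U_h(\sigma_h))-(u(\sigma),U(\sigma))\|_{\mathbb{H}}
 \le \|(u_h(\sigma_h),U_h(\sigma_h))-(u_h(\sigma),U_h(\sigma))\|_{\mathbb{H}}
 + \|(u_h(\sigma),U_h(\sigma))-(u(\sigma),U(\sigma))\|_{\mathbb{H}}.
\]
The second term is a pure finite element discretization error for a fixed coefficient, which I would handle by a standard Cea-type argument: subtracting the discrete weak form \eqref{eqn:dispoly} from the continuous one \eqref{eqn:cemweakform} gives Galerkin orthogonality on $\mathbb{H}_h$, and then coercivity (via Lemma~\ref{lem:normequiv}) together with the approximation property \eqref{eqn:basicest} of $\mathcal{R}_h$ applied to the limit solution $u(\sigma)\in W^{1,q}(\Omega)\subset H^1(\Omega)$ (guaranteed by Theorem~\ref{thm:cemreg}) yields convergence to zero as $h\to 0$.

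The first term measures how the discrete solution operator responds to the perturbation $\sigma_h-\sigma$, and it is the heart of the argument. Here I would mimic the computation in the proof of Lemma~\ref{lem:contpoly} verbatim but at the discrete level. Writing $(w_h,W_h)=(u_h(\sigma_h),U_h(\sigma_h))-(u_h(\sigma),U_h(\sigma))\in\mathbb{H}_h$ and subtracting the two discrete weak forms, one obtains
\[
 \int_\Omega \sigma_h\,\nabla w_h\cdot\nabla v_h\,dx
 + \sum_{l=1}^L z_l^{-1}\!\int_{e_l}(w_h-W_{h,l})(v_h-V_l)\,ds
 = -\int_\Omega (\sigma_h-\sigma)\,\nabla u_h(\sigma)\cdot\nabla v_h\,dx
\]
for all $(v_h,V)\in\mathbb{H}_h$. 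Testing with $(v_h,V)=(w_h,W_h)$, invoking coercivity on the left and the generalized H\"older inequality on the right with exponents $p^{-1}+q^{-1}=2^{-1}$ gives
\[
 \|(w_h,W_h)\|_{\mathbb{H}} \le C\,\|\sigma_h-\sigma\|_{L^p(\Omega)}\,\|\nabla u_h(\sigma)\|_{L^q(\Omega)}.
\]
Since $\sigma_h\to\sigma$ in $L^r(\Omega)$ and both are bounded in $L^\infty(\Omega)$ by $\lambda^{-1}$, the interpolation trick $\|\sigma_h-\sigma\|_{L^p}^p\le\lambda^{r-p}\|\sigma_h-\sigma\|_{L^r}^r$ (exactly as at the end of Lemma~\ref{lem:contpoly}) forces $\|\sigma_h-\sigma\|_{L^p(\Omega)}\to 0$ when $r<p$, and it converges trivially when $r\ge p$.

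The main obstacle is controlling the factor $\|\nabla u_h(\sigma)\|_{L^q(\Omega)}$ uniformly in $h$ for some $q>2$: the regularity estimate of Theorem~\ref{thm:cemreg} applies to the continuous solution $u(\sigma)$, but what appears above is the \emph{discrete} solution $u_h(\sigma)$, and a priori one only has the natural $H^1$ (i.e.\ $q=2$) bound for it. The generalized H\"older step genuinely needs the extra integrability $q>2$, so the gap must be closed. I expect to bridge this either by proving a discrete $W^{1,q}$ stability estimate (a discrete Meyers-type bound uniform in $h$, for which the quasi-uniformity and shape-regularity of $\mathcal{T}_h$ are essential), or by replacing $\nabla u_h(\sigma)$ with $\nabla u(\sigma)$ at the cost of an additional $\|\nabla(u_h(\sigma)-u(\sigma))\|_{L^2(\Omega)}$ term—which is precisely the second error piece already shown to vanish—while keeping $\|\nabla u(\sigma)\|_{L^q(\Omega)}$ bounded via Theorem~\ref{thm:cemreg}. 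The latter route seems cleaner: it isolates the higher-integrability requirement onto the continuous solution, where Theorem~\ref{thm:cemreg} directly supplies it, and reduces the remaining discrete quantity to an $L^2$-gradient difference that the Cea argument already controls.
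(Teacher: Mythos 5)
Your proposal is correct, but it organizes the argument differently from the paper. The paper does not split the error by a triangle inequality; instead it writes a single error identity for $(u-u_h,U-U_h)$ by testing both the continuous and discrete weak forms with the one function $(\mathcal{R}_hu-u_h,U-U_h)\in\mathbb{H}_h$ and subtracting. This produces three terms: a coefficient-perturbation term $-\int_\Omega(\sigma-\sigma_h)\nabla u\cdot\nabla(\mathcal{R}_hu-u_h)\,dx$, a volume consistency term involving $\nabla(u-\mathcal{R}_hu)$, and the analogous boundary term on the electrodes, each of which is sent to zero using, respectively, the generalized H\"older inequality with Theorem~\ref{thm:cemreg} plus the $L^p$--$L^r$ interpolation trick, the approximation property \eqref{eqn:basicest} of $\mathcal{R}_h$, and the trace theorem. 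The key structural point is that in the paper's identity the Meyers factor automatically falls on the \emph{continuous} gradient $\nabla u$, while the discrete quantities only ever appear in $L^2$, so the issue you correctly flag --- the lack of a uniform discrete $W^{1,q}$ bound for $u_h(\sigma)$ --- never arises. Your preferred resolution (rewriting $\nabla u_h(\sigma)=\nabla u(\sigma)+\nabla(u_h(\sigma)-u(\sigma))$, putting the $L^q$ burden on $\nabla u(\sigma)$ via Theorem~\ref{thm:cemreg}, and absorbing the remainder with the $L^\infty$ bound on $\sigma_h-\sigma$ and the already-established C\'ea-type convergence) closes the gap and is essentially a two-step rearrangement of what the paper does in one step; the first alternative you mention (a discrete Meyers estimate uniform in $h$) would be a substantially harder and unnecessary detour. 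One point you should not leave implicit when writing this up: the electrode boundary terms do not cancel in either of your two sub-problems the way you might hope, and in the C\'ea step they must be controlled via the trace theorem exactly as the paper's term $III$ is; this is routine but needs to be stated.
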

\begin{proof}
First recall the weak formulation of $(u,U)\equiv (u(\sigma),U(\sigma))$ and $(u_h,U_h)
\equiv(u_h(\sigma_h),U_h(\sigma_h))$ in \eqref{eqn:cemweakform} and \eqref{eqn:dispoly}, respectively. 
It follows from Lax-Milgram theorem that both $(u,U)$ and $(u_h,U_h)$ are uniformly bounded in $\mathbb{H}$.
By setting the test functions $(v,V)$ and $(v_h,V)$ in identities
\eqref{eqn:cemweakform} and \eqref{eqn:dispoly} to
$(\mathcal{R}_hu-u_h,U-U_h)\in\mathbb{H}_h\subset \mathbb{H}$ and then subtracting them,
we deduce
\begin{equation*}
   \begin{aligned}
      &\int_\Omega\sigma_h|\nabla(u-u_h)|^2dx+\sum_{l=1}^Lz_l^{-1}\int_{e_l}|u-u_h-U_l+U_{h,l}|^2ds\\
      =&-\int_\Omega(\sigma-\sigma_h)\nabla u\cdot\nabla(\mathcal{R}_hu-u_h)dx +
       \int_\Omega\sigma_h\nabla(u-u_h)\cdot\nabla(u-\mathcal{R}_h u)dx\\
       &+ \sum_{l=1}^Lz_l^{-1}\int_{e_l}(u-\mathcal{R}_hu)(u-u_h-U_l+U_{h,l})ds:=I+II+III.
   \end{aligned}
\end{equation*}
It suffices to estimate the three terms ($I$, $II$ and $III$) on the right hand side. For
the first term $I$, the generalized H\"{o}lder's inequality gives
\begin{equation*}
 | I |\leq \|\sigma-\sigma_h\|_{L^p(\Omega)}\|\nabla u\|_{L^q(\Omega)}\|\nabla (\mathcal{R}_hu-u_h)\|_{L^2(\Omega)},
\end{equation*}
where the exponent $q>2$ is from Theorem \ref{thm:cemreg}, and the exponent $p$ satisfies
$p^{-1}+q^{-1}=2^{-1}$. Further, we note that
\begin{equation*}
   \begin{aligned}
      \|\nabla(\mathcal{R}_hu-u_h)\|_{L^2(\Omega)}&\leq \|\nabla \mathcal{R}_hu\|_{L^2(\Omega)}+\|\nabla u_h\|_{L^2(\Omega)}\\
      &\leq C(\|u\|_{H^1(\Omega)}+\|u_h\|_{H^1(\Omega)})<C.
   \end{aligned}
\end{equation*}
By repeating the proof in Lemma \ref{lem:contpoly}, we deduce that the first term $I\rightarrow0$
as $h$ tends to zero. For the second term $II$, we deduce from the uniform bound of the discrete
admissible set $\mathcal{A}_h$ that
\begin{equation*}
   \begin{aligned}
      II &\leq \|\sigma_h\|_{L^\infty(\Omega)}\|\nabla(u-u_h)\|_{L^2(\Omega)}\|\nabla(u-\mathcal{R}_hu)\|_{L^2(\Omega)}\\
         & \leq \lambda^{-1}\|\nabla(u-u_h)\|_{L^2(\Omega)}\|\nabla(u-\mathcal{R}_hu)\|_{L^2(\Omega)},
   \end{aligned}
\end{equation*}
which tends to zero in light of the approximation property of the operator $\mathcal{R}_h$ in
\eqref{eqn:basicest} and uniform boundedness of $\|\nabla(u-u_h)\|_{L^2(\Omega)}$.
The third term $III$ follows analogously from the trace theorem \cite{Evans:1992a}.
These three estimates together with Lemma \ref{lem:normequiv} yield the desired assertion.
\end{proof}

Now we can state the first main result, i.e., the convergence of finite element
approximations $\{\sigma_h^\ast\}$ on polyhedral domains.
\begin{thm}\label{thm:polyconv}
Let $\{\sigma_h^\ast\in\mathcal{A}_h\}_{h>0}$ be a sequence of minimizers to the discrete optimization
problem \eqref{eqn:discopt}. Then it contains a subsequence convergent to a minimizer of problem \eqref{eqn:tikh} as $h$ tends to zero.
\begin{itemize}
  \item[(a)] The convergence is weakly in $H^1(\Omega)$, if $\Psi(\sigma_h)=\tfrac{1}{2}\|\sigma_h\|_{H^1(\Omega)}^2$;
  \item[(b)] The convergence is in $L^1(\Omega)$, if $\Psi(\sigma_h)=|\sigma_h|_{\mathrm{TV}(\Omega)}$.
\end{itemize}
\end{thm}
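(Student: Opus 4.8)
The plan is to follow the standard variational ($\Gamma$-convergence) recipe for convergence of minimizers: derive a uniform bound yielding compactness of $\{\sigma_h^\ast\}$, prove a liminf inequality combining lower semicontinuity of the penalty with the continuity of the discrete forward map from Lemma \ref{lem:polyhed}, and then construct a recovery sequence to obtain the matching limsup inequality. I treat cases (a) and (b) in parallel, the only difference being the ambient space ($H^1(\Omega)$ versus $\mathrm{BV}(\Omega)$) and the corresponding compact embedding from Lemma \ref{lem:embed}.

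First I would derive the a priori bound. Fix a reference conductivity $\sigma_0\in\mathcal{A}$ with finite penalty, e.g. a constant in $[\lambda,\lambda^{-1}]$, which already lies in every $V_h$. Discrete optimality gives $J_h(\sigma_h^\ast)\le J_h(\sigma_0)$, and since the data term is nonnegative this bounds $\Psi(\sigma_h^\ast)$ uniformly in $h$. Together with the uniform $L^\infty$ bound built into $\mathcal{A}_h$, this makes $\{\sigma_h^\ast\}$ bounded in $H^1(\Omega)$ in case (a) and in $\mathrm{BV}(\Omega)$ in case (b). By the compact embeddings of Lemma \ref{lem:embed}, a subsequence (not relabeled) converges to some $\sigma^\ast$: weakly in $H^1(\Omega)$ and strongly in $L^2(\Omega)$ in case (a), and strongly in $L^1(\Omega)$ in case (b). The pointwise bounds defining $\mathcal{A}$ pass to the limit along an a.e.-convergent subsequence, so $\sigma^\ast\in\mathcal{A}$.

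Next I establish the liminf inequality $J(\sigma^\ast)\le\liminf_{h\to0}J_h(\sigma_h^\ast)$. For the data term I invoke Lemma \ref{lem:polyhed}: the strong $L^r$ convergence $\sigma_h^\ast\to\sigma^\ast$ (with $r=2$ in case (a), $r=1$ in case (b)) yields $U_h(\sigma_h^\ast)\to U(\sigma^\ast)$ in $\mathbb{R}^L$, whence $\tfrac12\|U_h(\sigma_h^\ast)-U^\delta\|^2\to\tfrac12\|U(\sigma^\ast)-U^\delta\|^2$. For the penalty I use the weak lower semicontinuity of $\tfrac12\|\cdot\|_{H^1(\Omega)}^2$ in case (a) and the $L^1$-lower semicontinuity of $|\cdot|_{\mathrm{TV}(\Omega)}$ in case (b), so that $\Psi(\sigma^\ast)\le\liminf_{h\to0}\Psi(\sigma_h^\ast)$.

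The remaining and most delicate ingredient is the recovery sequence: for any competitor $\sigma\in\mathcal{A}$ with $J(\sigma)<\infty$ I must produce $\tilde\sigma_h\in\mathcal{A}_h$ with $\tilde\sigma_h\to\sigma$ in the relevant topology and $\limsup_{h\to0}\Psi(\tilde\sigma_h)\le\Psi(\sigma)$, after which Lemma \ref{lem:polyhed} again controls the data term. By a diagonal argument it suffices to build this for $\sigma$ in a dense subclass, so I first approximate $\sigma$ by smooth functions (density of $C^\infty(\overline\Omega)$ in $H^1(\Omega)$ in case (a); Lemma \ref{lem:density} in case (b)) and then apply the pointwise truncation $T_\lambda(t)=\max(\lambda,\min(\lambda^{-1},t))$ to restore membership in $\mathcal{A}$; truncation is a contraction that increases neither $\|\nabla\cdot\|_{L^2(\Omega)}$ nor the total variation, so together with lower semicontinuity the penalty values still converge. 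The truncated competitor is Lipschitz, hence lies in $W^{1,p}(\Omega)$ for every $p$, so its nodal interpolant $\mathcal{I}_h$ is well defined, inherits the box constraints nodewise (giving $\mathcal{I}_h\sigma\in\mathcal{A}_h$), and satisfies $\nabla\mathcal{I}_h\sigma\to\nabla\sigma$ so that $\Psi(\mathcal{I}_h\sigma)\to\Psi(\sigma)$. Assembling the optimality chain
\[
J(\sigma^\ast)\le\liminf_{h\to0}J_h(\sigma_h^\ast)\le\limsup_{h\to0}J_h(\sigma_h^\ast)\le\limsup_{h\to0}J_h(\tilde\sigma_h)\le J(\sigma),
\]
where the middle inequality is the discrete optimality $J_h(\sigma_h^\ast)\le J_h(\tilde\sigma_h)$, shows $\sigma^\ast$ minimizes $J$ over $\mathcal{A}$. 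I expect the main obstacle to be exactly this recovery step in the total variation case, where controlling $|\mathcal{I}_h\sigma|_{\mathrm{TV}(\Omega)}$ forces one to combine truncation, Lemma \ref{lem:density}, and the $L^1$-convergence of interpolated gradients with care, rather than estimating termwise.
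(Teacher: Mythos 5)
Your proposal is correct and follows essentially the same route as the paper: a uniform bound from the constant competitor, compactness via Lemma \ref{lem:embed}, the liminf inequality from Lemma \ref{lem:polyhed} combined with weak lower semicontinuity, and a recovery sequence built from density of smooth functions (Lemma \ref{lem:density} in the TV case), pointwise truncation onto $[\lambda,\lambda^{-1}]$, and nodal interpolation. The only cosmetic difference is that you organize the comparison step as a $\Gamma$-convergence limsup inequality with a diagonal argument, whereas the paper first sends $h\to0$ for fixed $\epsilon$ and then $\epsilon\to0$ using Lemma \ref{lem:contpoly}; the two orderings are equivalent.
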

\begin{proof}
First we note that the constant function $\sigma_h\equiv1$ belongs to the discrete admissible set
$\mathcal{A}_h$ for all $h$. The minimizing property of $\sigma_h^\ast$ indicates
that the sequence of functional values $\{J_h(\sigma_h^\ast)\}$ is uniformly bounded. Thus the
sequence $\{\Psi(\sigma_h^\ast)\}$ is uniformly bounded, and there exists a
subsequence, again denoted by $\{\sigma_h^\ast\}$, and some $\sigma^\ast\in\mathcal{A}$,
such that $\sigma_h^\ast\rightarrow \sigma^\ast$ weakly either in $H^1(\Omega)$
or $\mathrm{BV}(\Omega)$. By Lemma \ref{lem:embed},
we have $\sigma_h^\ast\rightarrow \sigma^\ast$ in $L^1(\Omega)$, which together with
Lemma \ref{lem:polyhed} implies
\begin{equation*}
   (u_h(\sigma_h^\ast),U(\sigma_h^\ast)) \rightarrow (u(\sigma^\ast),U(\sigma^\ast))\quad \mbox{ in } \mathbb{H}\mbox{ as } h\rightarrow0.
\end{equation*}
Meanwhile, the weak lower semicontinuity of norms implies
$\Psi(\sigma^\ast)\leq \liminf_{h\rightarrow0}\Psi(\sigma_h^\ast).$
Altogether, we derive
\begin{equation}\label{eqn:lsc}
  \begin{aligned}
    J(\sigma^\ast) &= \tfrac{1}{2}\|U(\sigma^\ast)-U^\delta\|^2 + \alpha\Psi(\sigma^\ast)\\
     &\leq \lim_{h\rightarrow0}\tfrac{1}{2}\|U_h(\sigma_h^\ast)-U^\delta\|^2 + \liminf_{h\rightarrow0}\alpha\Psi(\sigma_h^\ast)\\
     &\leq \liminf_{h\rightarrow0}\left(\tfrac{1}{2}\|U_h(\sigma_h^\ast)-U^\delta\|^2 + \alpha\Psi(\sigma_h^\ast)\right)
     =\liminf_{h\rightarrow0}J_h(\sigma_h^\ast)
  \end{aligned}
\end{equation}

Now we discuss the two penalties separately. First we consider the case $\Psi(\sigma)=\tfrac{1}{2}
\|\sigma\|_{H^1(\Omega)}^2$. For any $\sigma\in\mathcal{A}$, the density
of the space $C^\infty(\overline{\Omega})$ in the space $H^1(\Omega)$ \cite{Evans:1992a}
implies the existence of a sequence
$\{\sigma^\epsilon\}\subset C^\infty(\overline{\Omega})\cap\mathcal{A}$ such that
\begin{equation}\label{eqn:denpoly}
\lim_{\epsilon\rightarrow0^+}\|\sigma^\epsilon-\sigma\|_{H^1(\Omega)}=0.
\end{equation}
The minimizing property of $\sigma_h^\ast$ gives $J_h(\sigma_h^\ast)\leq J_h(\mathcal{I}_h\sigma^\epsilon)$
for any $\epsilon>0$. Letting $h$ to zero, and appealing to the property of interpolation operator $\mathcal{I}_h$,
Lemma \ref{lem:polyhed} and \eqref{eqn:lsc} yield $J(\sigma^\ast)\leq J(\sigma^\epsilon)$.
Since $\epsilon$ is arbitrary, by letting $\epsilon$ to zero, noting the approximation property of the
sequence $\sigma^\epsilon$ in \eqref{eqn:denpoly}, and the continuity result in Lemma
\ref{lem:contpoly}, we deduce that $J(\sigma^\ast)\leq J(\sigma)$ for any
$\sigma\in\mathcal{A}$. This shows the desired assertion for $\Psi(\sigma)=\tfrac{1}{2}\|\sigma\|_{H^1(\Omega)}^2$.

Next we consider the case $\Psi(\sigma)=|\sigma|_{\mathrm{TV}(\Omega)}$.
For any $\sigma\in\mathcal{A}$, Lemma \ref{lem:density} implies the existence of a sequence
$\{\sigma^\epsilon\}\subset C^\infty(\overline{\Omega})$ such that
$\int_\Omega |\sigma^\epsilon-\sigma|dx<\epsilon$ and $\left|\int_\Omega|\nabla \sigma^\epsilon|dx
-\int_\Omega|D\sigma|\right|<\epsilon$. Next we define $\tilde{\sigma}_\epsilon =P_{[c_0,c_1]}
\sigma^\epsilon$, where the operator $P_{[c_0,c_1]}$ denotes pointwise projection.
Since $\nabla \tilde{\sigma}^\epsilon =\nabla \sigma^\epsilon\chi_{\Omega_\epsilon}$ (with
the set $\Omega_\epsilon = \{x\in\Omega: c_0\leq \sigma^\epsilon\leq c_1\}$), which is
uniformly bounded, and thus $\tilde{\sigma}^\epsilon\in \mathcal{A}\cap W^{1,\infty}(\Omega)$.
With the choice $\sigma_h=\mathcal{I}_h\tilde{\sigma}^\epsilon\in V_h$, the minimizing property of
$\sigma_h^\ast\in\mathcal{A}_h$ gives $J_h(\sigma_h^\ast)\leq J_h(\mathcal{I}_h\tilde{\sigma}^\epsilon)$
for any $\epsilon>0$. By the approximation property of the operator $\mathcal{I}_h$ in
\eqref{eqn:basicest} and the fact that $\tilde{\sigma}^\epsilon\in W^{1,\infty}(\Omega)$,
we deduce
\begin{equation*}
  \lim_{h\rightarrow0}\mathcal{I}_h\tilde{\sigma}^\epsilon = \tilde{\sigma}^\epsilon \quad \mbox{ in } W^{1,1}(\Omega).
\end{equation*}
Letting $h$ to zero, and appealing to Lemma \ref{lem:polyhed} and
\eqref{eqn:lsc} yield $J(\sigma^\ast)\leq J(\tilde{\sigma}^\epsilon)$. We observe
the following approximation properties of the sequence $\tilde{\sigma}^\epsilon$,
\begin{equation*}
  \begin{aligned}
   \int_\Omega |\nabla\tilde{\sigma }^\epsilon|dx &= \int_{\Omega_\epsilon}|\nabla\sigma^\epsilon|dx \leq \int_\Omega
    |\nabla \sigma^\epsilon|dx \leq \int_\Omega |D\sigma|+\epsilon,\\
    \int_\Omega |\tilde{\sigma}^\epsilon-\sigma|dx &\leq \int_\Omega|\sigma^\epsilon-\sigma|dx<\epsilon,
  \end{aligned}
\end{equation*}
where the last line follows from the contraction property of the operator $P_{[c_0,c_1]}$.
By letting $\epsilon$ to zero and the continuity result in Lemma
\ref{lem:contpoly}, we deduce $J(\sigma^\ast)\leq J(\sigma)$ for any
$\sigma\in\mathcal{A}$, i.e., $\sigma^\ast$ is indeed a minimizer to the functional
$J(\sigma)$. This concludes the proof of the theorem.
\end{proof}

\begin{rmk}
A close inspection of the proof of Theorem \ref{thm:polyconv} indicates with minor
modifications the result holds also for the continuum model, provided that the input
current $j$ satisfies a certain regularity condition so that an analogue of Theorem
\ref{thm:cemreg} is valid, cf. \cite[Appendix A]{JinKhanMaass:2010}. Further, the
analysis can be easily adapted to multi-parameter models, e.g.,
$\Psi(\sigma)=\tfrac{1}{2}\|\sigma\|_{H^1(\Omega)}^2+\gamma \|\sigma\|_{L^1(\Omega)}$.
\end{rmk}

\section{Convergence for smooth curved domains}\label{sect:curvdom}

Now we turn to the convergence analysis of finite element approximations on convex smooth curved
domains. In the finite element literature, there are several different ways to treat curved domains, e.g.,
isoparametric elements \cite{Ciarlet:2002} and curved elements \cite{Bernardi:1989}.
In EIT imaging algorithms, we usually approximate the domain $\Omega$ with a polyhedral domain $\Omega_h$ (with its
boundary denoted by $\Gamma_h$), and solve the forward problem \eqref{eqn:cem} directly
on the polyhedral domain $\Omega_h$, with the resulting solution taken as the
desired approximation. That is, all computations are performed on a polyhedral domain
$\Omega_h$. Such a discretization strategy has been routinely employed in the implementation of
EIT imaging techniques, but to the best of our knowledge, it has not been rigorously justified.

Throughout, the triangulation $\mathcal{T}_h$ is shape regular and quasi-uniform and it
consists of simplicial elements, and the finite element space $V_h$ is the canonical piecewise
linear finite element space defined on $\mathcal{T}_h$. Further, we make the following assumption
on the domain $\Omega$ and the polyhedral approximation $\Omega_h$ (with their boundaries being
$\Gamma$ and $\Gamma_h$, respectively). The finite element space $V_h$ will be used to discretize
both the forward model and the conductivity distribution.
\begin{ass} \label{ass:poly}
The domain $\Omega$ is convex with a $C^2$ boundary $\Gamma$. The approximating polyhedral domain $\Omega_h$ is also convex, and
the vertices of $\Gamma_h$ are on the boundary $\Gamma$.
\end{ass}

\begin{rmk}
The convexity and smoothness in Assumption \ref{ass:poly} is mainly for 3D domains. In the 2D case,
the discussions below work for domains with a piecewise smooth boundary; see e.g.
\cite{BrambleKing:1994}. That is, the convexity of the domain is not required then.
\end{rmk}

To ease the exposition, we introduce some further notation. By Assumption \ref{ass:poly},
clearly there holds the relation $\Omega_h\subset\Omega$. Due to the convexity of the domain $\Omega_h$,
we can define a projection operator $\phi_h$ by
\begin{equation*}
  \begin{aligned}
  \phi_h: &\ \ \overline{\Omega}\backslash{\Omega_h} \mapsto \Gamma_h,\\
  &\ \ \phi_h(x) = 
  \text{argmin}_{z\in \Gamma_h} |x-z|.
  \end{aligned}
\end{equation*}
We denote by $\phi_h^1$ and $\phi_h^2$ the map $\phi_h$ restricted to the interior
domain $\Omega\backslash\overline{\Omega}_h$ and the boundary $\Gamma$, respectively, i.e.,
$\phi_h^1 = \phi_h|_{\Omega\backslash\overline{\Omega}_h}$ and $\phi_h^2 = \phi_h|_{\Gamma}$.
Next let $S_h$ be any finite element surface (of the triangulation $\mathcal{T}_h)$
of the polyhedral domain $\Omega_h$ lying on $\Gamma_h$,
and $T_h$ be the finite element to which $S_h$ belongs. We denote the pair by
$(S_h,T_h)$, and by $(\phi_h^1)^{-1}(S_h)$ the preimage of $S_h$ under $\phi_h^1$.
Then there holds $\Omega\backslash\overline{\Omega}_h \subset \cup_{S_h}(\phi_h^1)^{-1}(S_h)$.
Further for any interior point $x_h$ to the surface $S_h$ (understood elementwise), we
can define a unit outward normal vector $n_{x_h}$ through $x_h$, which is perpendicular
to $S_h$. Due to the convexity of the domain $\Omega$, the outward normal vector $n_{x_h}$
intersects the boundary $\Gamma$ uniquely at a point $x$. This defines a map
\begin{equation*}
  \begin{aligned}
    \psi_h:&\ \ \cup_{S_h}\mbox{int}{S}_h \mapsto \Gamma,\\
           &\ \ \psi_h(x_h) = x.
  \end{aligned}
\end{equation*}
Furthermore, by the convexity of the domain $\Omega_h$, $\Omega_h$ lies on only one side
of any plane that contains $S_h$, and therefore, $\phi_h^2(\psi_h(x_h)) = x_h$,
$x_h\in\mathrm{int}S_h$. For any surface patch $e\subset \Gamma$, there holds
\begin{equation}\label{equ:e1}
  \phi_h^2(e) = \cup {S_{e,h}},
\end{equation}
where $S_{e,h} = S_h\cap \phi_h^2(e)$.
Further, we denote the subset $\cup_{x\in\mbox{int}S_{e,h}}\psi_h(x)\subset e$ by $\varrho_h(e)$, which will
be taken as an approximation to the surface electrode $e$ in the convergence analysis.

The following lemma provides the crucial estimates on the domain approximation under Assumption \ref{ass:poly}. These
estimates are crucial to subsequent convergence analysis.
\begin{lem}\label{lem:assump}
Let Assumption \ref{ass:poly} be fulfilled. Then for any small $h$, the following statements hold.
\begin{itemize}
\item[(i)] The distance $\displaystyle d(\Gamma,\Gamma_h):=\sup_{x\in\Gamma}\inf_{x_h\in\Gamma_h}|x-x_h|$
        between the boundaries $\Gamma$ and $\Gamma_h$ converges to zero at $d(\Gamma,\Gamma_h)\leq Ch^2$.
\item[(ii)] The unit outward normal vector $n(\phi_h^2(x))$ for $\phi_h^2(x)$ to $\Gamma_h$ converges to that at $x$ to the boundary $\Gamma$ at $|n(x)-n(\phi_h^2(x))|\leq Ch$.
\item[(iii)] The measure $|\Gamma_h|$ converges to $|\Gamma|$, i.e., $\left||\Gamma|-|\Gamma_h|\right|\leq Ch$.
\end{itemize}
\end{lem}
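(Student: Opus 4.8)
The plan is to localize the analysis to a single boundary simplex and to exploit the fact that, since the vertices of $\Gamma_h$ lie on $\Gamma$ by Assumption \ref{ass:poly}, each flat surface element $S_h$ is precisely the graph of the piecewise linear Lagrange interpolant of the $C^2$ boundary. All three estimates then reduce to standard polynomial interpolation bounds, with the second-derivative bound supplied uniformly by the $C^2$ regularity of $\Gamma$ (finite by compactness of $\Gamma$). Concretely, I would cover $\Gamma$ by finitely many coordinate charts in which $\Gamma$ is the graph $x_d=g(x')$ of a $C^2$ function $g$ over a piece of a tangent hyperplane, with $|g|_{W^{2,\infty}}$ bounded uniformly across charts. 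For $h$ small enough, each boundary element $S_h$ meeting a chart is contained in it, and because the vertices of $S_h$ lie on $\Gamma$, the affine function describing $S_h$ coincides with $g$ at those vertices; hence $S_h$ is the graph of the nodal interpolant $\mathcal{I}_h g$ on that element.

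For part (i), I would bound the distance from $x\in\Gamma$ to $\Gamma_h$ by the vertical graph distance $|g(x')-\mathcal{I}_h g(x')|$ at the corresponding base point, and invoke the classical estimate $\|g-\mathcal{I}_h g\|_{L^\infty}\leq Ch^2|g|_{W^{2,\infty}}$. Taking the supremum over $x$ and over the finitely many charts yields $d(\Gamma,\Gamma_h)\leq Ch^2$. For part (ii), I would write the unit normals to $\Gamma$ and to $S_h$ as $n=(-\nabla g,1)/\sqrt{1+|\nabla g|^2}$ and $n_h=(-\nabla\mathcal{I}_h g,1)/\sqrt{1+|\nabla\mathcal{I}_h g|^2}$, respectively (up to orientation). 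Since the map $v\mapsto(-v,1)/\sqrt{1+|v|^2}$ is Lipschitz, the normal error is controlled by the gradient interpolation error, $|n-n_h|\leq C\|\nabla(g-\mathcal{I}_h g)\|_{L^\infty}\leq Ch|g|_{W^{2,\infty}}$; identifying $n_h$ with the outward normal to $\Gamma_h$ at $\phi_h^2(x)$ gives the claim.

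Part (iii) I would deduce from the same graph representation: writing $|\Gamma|$ and $|\Gamma_h|$ as integrals of the area elements $\sqrt{1+|\nabla g|^2}$ and $\sqrt{1+|\nabla\mathcal{I}_h g|^2}$ over a common base domain, the reverse triangle inequality applied to the lifted vectors $(v,1)$ (whose lengths are exactly $\sqrt{1+|v|^2}$) gives the pointwise bound $|\sqrt{1+|\nabla g|^2}-\sqrt{1+|\nabla\mathcal{I}_h g|^2}|\leq|\nabla(g-\mathcal{I}_h g)|\leq Ch$. Integrating over the base domain, whose measure is $O(1)$, and summing over charts yields $\left||\Gamma|-|\Gamma_h|\right|\leq Ch$.

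I expect the genuinely delicate part to be not any single estimate — each is a routine interpolation bound once set up — but the bookkeeping of the localization: verifying that the convexity of $\Omega$ and $\Omega_h$ makes the projections $\phi_h^2$ and $\psi_h$ well-defined single-valued maps without folding, that for small $h$ every relevant $S_h$ sits inside one coordinate chart, and that the base-point correspondence between $\Gamma$ and $\Gamma_h$ used in (i)--(iii) is consistent across overlapping charts. Convexity is precisely what prevents the inscribed polyhedral surface from oscillating transversally (the pathology behind non-convergent polyhedral areas), while the $C^2$ hypothesis furnishes the uniform curvature bound driving every estimate.
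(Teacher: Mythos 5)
Your proposal follows essentially the same route as the paper's proof: a local graph representation $x_d=g(x')$ of the $C^2$ boundary (with convexity guaranteeing the chart is single-valued and the projections do not fold), the observation that $S_h$ is the graph of the piecewise linear interpolant of $g$, and then the standard $O(h^2)$/$O(h)$ interpolation bounds for the function, its gradient (hence the normal), and the area element, summed over a finite chart cover. The only difference is cosmetic — the paper rederives the interpolation estimates by hand via Taylor expansion and an inner-product computation with $n(A)\cdot\overrightarrow{BA}$, using quasi-uniformity to keep the projected triangles nondegenerate, whereas you quote them — so no substantive comparison is needed.
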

\begin{proof}
We discuss only the three-dimensional case, since the treatment of the two-dimensional case is
straightforward (cf. \cite{BrambleKing:1994}). By the $C^2$ regularity of the boundary
$\Gamma$ from Assumption \ref{ass:poly}, the unit outward normal vector
$n:\Gamma\mapsto \mathbb{S}^2$ is Lipschitz continuous with a Lipschitz constant $L$,
i.e., $|n(x) - n(y)| \leq L |x-y|.$

Now for any fixed $x\in\Gamma$, $d(x,\Gamma_h)$ in assertions (i) and (ii) do not depend
on the choice of the coordinate system,
and hence we may assume $n(x) = (0,0,1)^\mathrm{t}$. The implicit function theorem ensures the existence
of a neighborhood $\mathcal{N}_x\subset \Gamma$ of $x$ such that $x_3 = f(x_1,x_2)$ in
$\mathcal{N}_x$ for some $f\in C^2$. In particular, we many choose the set
$\mathcal{N}_x = \{z\in\Gamma: |z - x| < L^{-1}\}$. To see this, it suffices to show that
any line parallel to the normal vector $n(x)=(0,0,1)^\mathrm{t}$ intersects $\mathcal{N}_x$
at most once, which we prove by contradiction. Suppose that the boundary $\Gamma$ is defined by $F(x)=0$. The definition
of $\mathcal{N}_x$ implies that for any $z\in\mathcal{N}_x$, $|n(z) - n(x)|\leq 1$, i.e.,
$n(z)\cdot n(x) \geq \frac{1}{2}$ and hence $\frac{\partial F}{\partial x_3}|_{z} >0$.
Now assume the contrary, i.e., there are two points $(x_1,x_2,\tilde{x}_3)$ and $(x_1,x_2,\hat{x}_3)$ in
$\mathcal{N}_x$ with $\tilde{x}_3 < \hat{x}_3$. The tangent plane at point $\tilde{x}:=(x_1,x_2,\tilde{x}_3)$ is given by
\begin{equation*}
  \left\{z=(z_1,z_2,z_3): (z - \tilde{x})\cdot\nabla F(\tilde{x}) = 0\right\}.
\end{equation*}
By the choice of the outward normal $n(\tilde{x})$ and the convexity of the domain $\Omega$, the surface $\Gamma$ lies
below the tangent plane, i.e., $(z -\tilde{x})\cdot\nabla F \leq 0$ for all $z\in\Gamma$, which contradicts
the strictly reverse inequality for the point $(x_1,x_2,\hat{x}_3)$. This shows the desired assertion on the choice of the
neighborhood $\mathcal{N}_x$.

Note that for any $z\in \mathcal{N}_x$, we have $n(z)\cdot n(x) > \frac{1}{2}$
and hence $f_1^2(z) + f_2^2(z) < 1$. Hence there holds
$|x-z|^2 = |x_1 - z_1|^2 + |x_2 - z_2|^2 + |f(x_1,x_2) - f(z_1,z_2)|^2$ and by
the intermediate theorem, $f(x_1,x_2) -
f(z_1,z_2) = (x_1 - z_1) f_1(\xi_1) + (x_2 - z_2)f_2(\xi_2)$, where the point $(\xi_1,\xi_2)$ lies on
the line  segment from $(x_1,x_2)$ to $(z_1,z_2)$. Consequently, $|f(x_1,x_2) - f(z_1,z_2)|^2
\leq 2(|x_1 - z_1|^2 + |x_2 - z_2|^2  )$. Moreover $|x-z|^2 \leq 3 (|x_1 - z_1|^2 + |x_2 - z_2|^2 )$,
and we can conclude that the set $\{(y_1,y_2): |x_1-y_1|^2 + |x_2 -  y_2|^2 \leq \frac{1}{3L^2}\}$ is a subset
of the projection of the set $\mathcal{N}_x$ to the $x_1$-$x_2$ plane.

Now suppose that $h$ is sufficiently small. Consider the projection of $x$ onto
the $x_1$-$x_2$ plane, which intersects some triangle $\triangle ABC $, with
$A, B, C \in \mathcal{N}_x$ being vertices on the polyhedral boundary $\Gamma_h$.
Therefore, there is a surface patch $\widehat{\mathcal{N}}_x\subset \mathcal{N}_x$ (with $x\in\widehat{\mathcal{N}}_x$)
and $\triangle ABC$ can be respectively represented by $(x_1,x_2,f(x_1,x_2))$ and $(x_1,x_2,f_h(x_1,x_2))$
with $f$ and $f_h$ being $C^2$ continuous and affine, respectively. We note that
by the construction in the preceding paragraph,
such a representation also exists for the neighboring elements.
Let $n(A)$ be the unit outward normal vector at the vertex $A\in\widehat{\mathcal{N}}_x$
to the surface $\Gamma$, and $n_{ABC}$ be that of the triangle $\triangle ABC$.
Then there holds
\begin{equation*}
  \begin{aligned}
     n(A)  & = (f_1^2 + f_2^2 + 1)^{-\frac{1}{2}}(-f_1, -f_2, 1)|_{A},\\
     n_{ABC} &= (f_{1,h}^2 + f_{2,h}^2 + 1)^{-\frac{1}{2}}(-f_{1,h},-f_{2,h},1)|_{ABC},
  \end{aligned}
\end{equation*}
where $f_i = \frac{\partial f}{\partial x_i}$ and $f_{i,h} = \frac{\partial f_h}{\partial x_i}|_{ABC}$,
$i=1,2$. Now the $C^2$ regularity of $f$ yields
\begin{equation*}
   \begin{aligned}
     x_{3,B} - x_{3,A} &= f(x_{1,B},x_{2,B}) - f(x_{1,A},x_{2,A})\\
     & = f_1|_A (x_{1,B}-x_{1,A}) + f_2|_A (x_{2,B}-x_{2,A}) + O(h^2).
   \end{aligned}
\end{equation*}
Consequently, the inner product $\left|n(A)\cdot\overrightarrow{BA}\right|$ can be bounded by
\begin{equation*}
  \left|n(A)\cdot\overrightarrow{BA}\right| = \left|(f_1^2 + f_2^2 + 1)^{-\frac{1}{2}}(-f_1,-f_2, 1)|_{A} \cdot[ x_{1,B}-x_{1,A} \ \
   x_{2,B}-x_{2,A} \ \ x_{3,B}-x_{3,A}]\right|\leq Ch^2.
\end{equation*}
Similarly, one can deduce $|n(A)\cdot\overrightarrow{CA}|\leq Ch^2$.
By the quasi-uniformity of the triangulation $\mathcal{T}_h$,
the angle $\angle BAC$ is strictly bounded from below by zero, $|AB|\approx h$ and $|AC| \approx h$,
and hence the inner product between $n(A)$ and any unit vector in the plane $ABC$
is of the order $O(h)$. The normal vector $n(A)$ can be expressed as $n(A) = \alpha n_{ABC} +
n_{ABC}^\perp$ with $n_{ABC}^\perp\perp n_{ABC}$ and $\alpha\in (0,1)$ (due to the choice of orientation).
Taking inner products both sides with $n_{ABC}^\perp$ yields
\begin{equation*}
  n(A)\cdot n_{ABC}^\perp = |n_{ABC}^\perp|^2
\end{equation*}
i.e., $|n_{ABC}^\perp| = O(h)$ and $\alpha = 1 - O(h)$.
Hence, $|n(A)- n_{ABC}|^2 = (1-\alpha)^2 + |n_{ABC}|^2 =  O(h^2)$, i.e., $|n(A)-n_{ABC}|\leq Ch$.
It follows immediately from this estimate that
\begin{equation*}
  |f_1(A) - f_{1,h}| \leq Ch \quad \mbox{and}\quad |f_2(A) - f_{2,h}| \leq Ch.
\end{equation*}
With these preliminaries, now we can prove the assertions.

\textit{proof of assertion (i).} For any point $\bar{x}=(x_1,x_2,f(x_1,x_2))\in
\triangle ABC$, there holds $|x_1-x_{1,A}| + |x_2-x_{2,A}| \leq Ch$, and further
\begin{equation*}
  \begin{aligned}
    f(x_1,x_2) &= f(x_{1,A},x_{2,A}) + (x_1-x_{1,A})f_1(A) + (x_2-x_{2,A})f_2(A) + O(h^2),\\
    f_h(x_1,x_2)& = f_h(x_{1,A},x_{2,A}) + (x_1-x_{1,A})f_{1,h} + (x_2-x_{2,A})f_{2,h}.
  \end{aligned}
\end{equation*}
Upon noting the identity $f(x_{1,A},x_{2,A}) = f_h(x_{1,A},x_{2,A})$, we deduce
\begin{equation*}
|f(x_1,x_2) - f_h(x_1,x_2)| \leq |x_1-x_{1,A}||f_1(A) - f_{1,h}| + |x_2 - x_{2,A}||f_2(A) - f_{2,h}| + Ch^2 \leq Ch^2.
\end{equation*}
Therefore  $d(\bar{x},\Gamma_h) \leq |f(x_1,x_2) - f_h(x_1,x_2)| \leq Ch^2$ and assertion (i) follows.

\textit{proof of assertion (ii).} It follows from the Lipschitz continuity of the unit normal
vector $n(x)$ that
\begin{equation}\label{eqn:est:normal}
   |n(x) - n_{ABC}| \leq |n(x) - n(A)| + |n(A)- n_{ABC}| \leq L|x-x_A| + Ch \leq Ch.
\end{equation}
Now we distinguish the following cases. Case (a): $\phi_h^2(x)\in \triangle ABC$, then the assertion
follows directly from \eqref{eqn:est:normal} and the fact that $n(\phi_h^2(x))=n_{ABC}$.
Case (b): $\phi_h^2(x)\notin\triangle ABC$. By part (i), $|x-\phi_h^2(x)|=d(x,\Gamma_h)\leq Ch^2$,
i.e., $\phi_h^2(x)$ lies within an $O(h^2)$ neighborhood of the point $x$. Let $\bar{x}\in
\triangle ABC$ be the intersection point defined by the projection through the point $x\in\Gamma$
onto the $x_1$-$x_2$ plane. Then by the triangle inequality, $|\bar{x}-\phi_h^2(x)|\leq Ch^2$.
Then the $C^2$ regularity of $f$ yields $|x-\check{\phi}_h^2(x)|\leq Ch^2$, where $\check{\phi}_h^2(x)$
denotes the pull-back of the orthogonal projection of $\phi_h^2(x)$ (onto the $x_1$-$x_2$ plane)
to the boundary $\Gamma$. The Lipschitz continuity of the normal vector, \eqref{eqn:est:normal}
and the triangle inequality yield
\begin{equation*}
  |n(x)-n(\phi_h^2(x))| \leq |n(x)-n(\check{\phi}_h^2(x))| + |n(\check{\phi}_h^2(x))-n(\phi_h^2(x))| \leq Ch.
\end{equation*}
This completes the proof of assertion (ii).

\textit{proof of assertion (iii)}. Here we first consider a local patch. For any subset
$\mathcal{N}\subset \mathcal{N}_x$ (with the choice $n(x)=(0,0,1)^\mathrm{t}$) with a boundary $\partial\mathcal{N}$
of finite perimeter, let $\mathcal{N}_h\subset \Gamma_h$ be the approximation of $\mathcal{N}$
which consists of triangles with all vertices lying on $\mathcal{N}$, and $\mathcal{N}_{x_1,x_2}
\subset\mathbb{R}^2$ be the orthogonal projection of $\mathcal{N}_h$ onto the $x_1$-$x_2$ plane.
According to the preceding construction, the projection is well defined.
Then for any $(x_1,x_2)\in \mathcal{N}_{x_1,x_2}$, $(x_1,x_2,f_h(x_1,x_2))\in \mathcal{N}_h$ and
$(x_1,x_2,f(x_1,x_2))\in \mathcal{N}$. Now for any triangle $\triangle ABC \subset \mathcal{N}_h$,
$f_{i,h}|_{\triangle ABC}$ is constant, and consequently
\begin{equation*}
\left|f_i|_{(x_1,x_2)} - f_{i,h}|_{(x_1,x_2)}\right| \leq \left|f_i|_{(x_1,x_2)} - f_i|_{A}\right| + \left|f_i|_{A} - f_{i,h}|_{A}\right| \leq Ch.
\end{equation*}
Let $\widetilde{\mathcal{N}} = \{(x_1,x_2,f(x_1,x_2)):(x_1,x_2)\in\mathcal{N}_{x_1,x_2}\}\subset\mathcal{N}$. Then
\begin{equation*}
  \begin{aligned}
     |\widetilde{\mathcal{N}}| = \int_{\mathcal{N}_{x_1,x_2}} \sqrt{f_1^2 + f_2^2 +1}dx_1dx_2\quad \mbox{and}\quad
     |\mathcal{N}_h| = \int_{\mathcal{N}_{x_1,x_2}} \sqrt{f_{1,h}^2 + f_{2,h}^2 +1}dx_1dx_2,
  \end{aligned}
\end{equation*}
where the derivatives $f_{i,h}$ should be understood elementwise. Therefore,
\begin{equation*}
\left||\widetilde{\mathcal{N}}| - |\mathcal{N}_h|\right| \leq \int_{\mathcal{N}_{x_1,x_2}} |f_1 - f_{1,h}| + |f_2 - f_{2,h}| \leq Ch.
\end{equation*}
Moreover, since the mesh size is $h$, and the surface $\Gamma$ is $C^2$, the set
$\mathcal{N}\backslash\widetilde{\mathcal{N}}$ is contained in the set
$\{x: d(x,\partial\mathcal{N}) \leq Ch\}$. Consequently, we have
\begin{equation*}
\left||{\mathcal{N}}| - |\mathcal{N}_h|\right| \leq \left||\widetilde{\mathcal{N}}| - |\mathcal{N}_h|\right| + \left|\mathcal{N}\backslash\widetilde{\mathcal{N}}\right| \leq Ch.
\end{equation*}
Now we estimate the approximation of the whole boundary. Since the boundary $\Gamma$ is compact
and $\Gamma\subset \cup_{x\in\Gamma}\mathcal{N}_x$, by the finite opening cover
theorem, there exists a finite number of points $\{x^i\}_{i=1}^n$ such that
$\Gamma\subset \cup_{x^i}\mathcal{N}_{x^i}$. Let
$\mathcal{N}^1 = \mathcal{N}_{x^1}$, $\mathcal{N}^2 = \mathcal{N}_{x^2}\backslash\mathcal{N}^1$,
$\mathcal{N}^i = \mathcal{N}_{x^i}\backslash\cup_{j=1}^{i-1}\mathcal{N}^j$, $i=3,...,n$.
Then each set $\mathcal{N}_{x^i}$ has a boundary of finite measure.
Clearly there holds $|\Gamma| = \sum_i|\mathcal{N}^i|$ and $|\Gamma_h|\geq \sum_i |\mathcal{N}^i_h|.$
Now by the nonexpansiveness of the map $\phi_h^2$, there holds
$|\Gamma_h| \leq |\Gamma|$. Now assertion (iii) follows from
\begin{equation*}
  |\Gamma| - |\Gamma_h| \leq \sum_i \left||\mathcal{N}^i| - |\mathcal{N}^i_h|\right| \leq Ch.
\end{equation*}
This completes the proof of the lemma.
\end{proof}

\begin{rmk}
The quasi-uniformity of the mesh is essential for Lemma \ref{lem:assump}. One can find a counterexample on the approximation
of surface area in \cite[Section 623]{Fihtengolc:2003} in the absence of the quasi-uniformity condition;
see also \cite[Sections 624 and 627]{Fihtengolc:2003} for related discussions.
\end{rmk}

With the help of Lemma \ref{lem:assump}, we can show the following properties of the maps $\phi_h^1$, $\phi_h^2$
and $\varrho_h$ defined at the beginning of this section, which are crucial for the convergent analysis below.
\begin{lem}\label{ass:map}
Let Assumption \ref{ass:poly} be fulfilled. Then there exists a function $\epsilon_h\rightarrow 0$ as
$h \rightarrow 0$ such that the maps $\phi_h^1$, $\phi_h^2$ and $\varrho_h$ satisfy:
  \begin{itemize}
    \item[(i)] For any element $S_h$, there holds $\tfrac{|(\phi_h^1)^{-1}(S_h)|}{|T_h|} \leq \epsilon_h$.
    \item[(ii)] For any subset $e\subset\Gamma$, there holds
     $|\phi_h^2(e)| \leq |e|$, $|\phi_h^2(e)|\rightarrow |e|$ and $|\phi_h^2(e)| \leq |\varrho_h(e)| \leq (1+\epsilon_h)|\phi_h^2(e)|$.
  \end{itemize}
\end{lem}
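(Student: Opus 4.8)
The plan is to treat the two assertions separately, taking $\epsilon_h = Ch$ and using the geometric estimates of Lemma \ref{lem:assump} as the main engine.

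For assertion (i), I would show that the preimage $(\phi_h^1)^{-1}(S_h)$ is a thin slab sitting between the flat element $S_h\subset\Gamma_h$ and the corresponding curved patch of $\Gamma$. Since $\overline{\Omega}_h\subset\Omega$ is convex, the restriction of $\phi_h$ to points projecting into $\mathrm{int}\,S_h$ is displacement along the single (constant) outward normal $n_{S_h}$ to $S_h$, so $(\phi_h^1)^{-1}(S_h)=\{z+tn_{S_h}: z\in\mathrm{int}\,S_h,\ 0<t\le\tau(z)\}$, where the height $\tau(z)$ is the deviation of $\Gamma$ from the affine element. By the pointwise bound $|f-f_h|\le Ch^2$ established in the proof of Lemma \ref{lem:assump}(i), we have $\tau(z)\le Ch^2$, whence $|(\phi_h^1)^{-1}(S_h)|\le Ch^2|S_h|$. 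Since the triangulation is shape regular and quasi-uniform, $|S_h|\le Ch^2$ while $|T_h|\ge ch^3$, and therefore $|(\phi_h^1)^{-1}(S_h)|/|T_h|\le Ch\to0$.

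For assertion (ii), the bound $|\phi_h^2(e)|\le|e|$ is immediate from the nonexpansiveness of $\phi_h^2$ (which on $\Gamma$ coincides with the metric projection onto the convex set $\overline{\Omega}_h$), already invoked in the proof of Lemma \ref{lem:assump}(iii), since a $1$-Lipschitz map does not increase surface measure. The heart of the argument is the two-sided comparison between $\phi_h^2(e)$ and its normal pull-back $\varrho_h(e)$. Recalling that $\psi_h$ maps $\mathrm{int}\,S_h$ onto a patch of $\Gamma$ by orthographic projection along the constant direction $n_{S_h}$, the area element transforms by $dA_\Gamma=|n_\Gamma\cdot n_{S_h}|^{-1}\,dA_{S_h}$, with $n_\Gamma$ the unit normal to $\Gamma$ at the image point. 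By Lemma \ref{lem:assump}(ii), applied along the normal line through $x_h$ so that the element normal, the relevant vertex normal and the normal at $\psi_h(x_h)$ are all mutually within $Ch$, one gets $|n_\Gamma-n_{S_h}|\le Ch$ and hence $1\le|n_\Gamma\cdot n_{S_h}|^{-1}\le(1-Ch^2)^{-1}\le 1+\epsilon_h$. Integrating this over $\cup_{S_h}\mathrm{int}\,S_{e,h}$ and using \eqref{equ:e1} together with $|\phi_h^2(e)|=\sum_{S_h}|\mathrm{int}\,S_{e,h}|$ yields the sandwich $|\phi_h^2(e)|\le|\varrho_h(e)|\le(1+\epsilon_h)|\phi_h^2(e)|$.

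Finally, to obtain $|\phi_h^2(e)|\to|e|$, I would combine this sandwich with the inclusion $\varrho_h(e)\subset e$. The complement $e\backslash\varrho_h(e)$ consists of the measure-zero images of element edges together with a boundary strip $\{x\in\Gamma: d(x,\partial e)\le Ch\}$, whose surface measure is $O(h)$ by the $C^2$ regularity of $\Gamma$; hence $|\varrho_h(e)|\to|e|$. Then $|\phi_h^2(e)|\ge(1+\epsilon_h)^{-1}|\varrho_h(e)|\to|e|$, which with $|\phi_h^2(e)|\le|e|$ forces $|\phi_h^2(e)|\to|e|$. I expect the Jacobian estimate for the normal map $\psi_h$ to be the main technical obstacle: transferring the normal-angle bound of Lemma \ref{lem:assump}(ii) from $\phi_h^2$ to $\psi_h$ uniformly in $x_h\in\mathrm{int}\,S_h$ requires the flat element normal, the vertex normals of $\Gamma$, and the normal at the pulled-back point to be simultaneously $O(h)$-close, and care is needed so that the resulting constant is uniform over all surface elements.
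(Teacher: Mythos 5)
Your treatment of assertion (i) and of the sandwich $|\phi_h^2(e)| \leq |\varrho_h(e)| \leq (1+\epsilon_h)|\phi_h^2(e)|$ is essentially the paper's argument: the preimage $(\phi_h^1)^{-1}(S_h)$ is contained in a slab of thickness $d(\Gamma,\Gamma_h)\leq Ch^2$ over $S_h$, giving measure $O(h^{d+1})$ against $|T_h|\approx h^d$; and the upper bound on $|\varrho_h(e)|$ comes from writing $\varrho_h(e)$ as a graph over $\mathrm{int}\,S_h$ and bounding the area element $\sqrt{1+f_1^2+f_2^2}=|n_\Gamma\cdot n_{S_h}|^{-1}$ via the $O(h)$ normal deviation of Lemma \ref{lem:assump}(ii) (your worry about transferring that bound to $\psi_h$ is a non-issue, since $\phi_h^2\circ\psi_h=\mathrm{id}$ on $\mathrm{int}\,S_h$ makes the estimate apply verbatim at $x=\psi_h(x_h)$). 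The lower bound the paper gets even more cheaply, from nonexpansiveness: $|\phi_h^2(e)|=|\phi_h^2(\varrho_h(e))|\leq|\varrho_h(e)|$.

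The one step where you genuinely diverge is the proof that $|\phi_h^2(e)|\rightarrow|e|$, and there your argument has a gap. You route it through $|\varrho_h(e)|\rightarrow|e|$, claiming that $e\backslash\varrho_h(e)$ consists of ``measure-zero images of element edges'' plus an $O(h)$ boundary strip. But $e\backslash\varrho_h(e)$ is (up to the strip near $\partial e$) the \emph{preimage} under $\phi_h^2$ of the element skeleton $\cup\partial S_h$, and preimages of null sets under a non-injective nonexpansive map need not be null: the preimage of an edge point is its normal cone intersected with $\Gamma$, a two-dimensional wedge whose trace on $\Gamma$ is a curve of positive length. One can show the total measure of these wedge traces is $o(1)$ (opening angle $O(h)$ times distance $O(h^2)$, summed over $O(h^{-(d-2)}\cdot h^{-1})$ edges), but that is an extra estimate you neither state nor prove. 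The paper sidesteps all of this with a one-line global argument: nonexpansiveness gives $|\phi_h^2(\Gamma\backslash e)|\leq|\Gamma|-|e|$, hence $|e|-|\phi_h^2(e)|\leq|\Gamma|-|\Gamma_h|$, which tends to zero by Lemma \ref{lem:assump}(iii). I would adopt that argument; your route can be repaired but buys nothing over it.
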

\begin{proof}
By definition, there holds $|\phi_h^1(x) - x| \leq d(\Gamma,\Gamma_h)$,
and $(\phi_h^1)^{-1}(S_h)\subset \{x\in\Omega\setminus\Omega_h: d(x,S_h) \leq d(\Gamma,\Gamma_h)\}$.
Hence, for any $S_h$, the measure of the set $\{x\in \Omega\setminus\Omega_h:d(x,S_h)
\leq d(\Gamma,\Gamma_h)\}$ satisfies $\left|\{x\in \Omega\setminus\Omega_h:d(x,S_h)
\leq d(\Gamma,\Gamma_h)\}\right| \leq Ch^{d+1}$, in view of Lemma \ref{lem:assump}(i)
and the fact $|S_h|\approx h^{d-1}$, and the quasi-uniformity of
$\mathcal{T}_h$ implies $|T_h| \approx h^d$, from which assertion (i) follows directly.

Since the map $\phi_h^2$ is nonexpansive, $|\phi_h^2(e)| \leq |e|$ and $|\phi_h^2(e)| = |\phi_h^2(\varrho_h(e))| \leq |\varrho_h(e)|$.
Meanwhile, clearly there holds $|e| - |\phi_h^2(e)| \leq |\Gamma| - |\Gamma_h|$, and consequently
$|\phi_h^2(e)| \rightarrow |e|$ by Lemma \ref{lem:assump}(iii). Now by Lemma \ref{lem:assump}(ii)
we have $\delta_h = \sup_{x}|n_x - n_{\phi_h^2(x)}| \leq Ch$. 
Without loss of generality, we can assume that $\phi_h^2(e) \subset \mbox{int} S_h$.
By using a local coordinate system, let $S_h$ be in the $x_1$-$x_2$ plane ($x_1$ axis in the two-dimensional case).
Then the set $\varrho_h(e)\subset \psi_h(\mbox{int}S_h)$ can be represented by a nonnegative function $f(x_1,x_2)$ for
$(x_1,x_2)\in \mathrm{int}S_h$. It follows from the definition of $\delta_h$ that
\begin{equation*}
\left|\tfrac{(f_1,f_2,1)}{\sqrt{1+f_1^2 + f_2^2}} - (0,0,1)\right|\leq \delta_h,
\end{equation*}
from which it follows that $f_1^2 + f_2^2 \leq \tfrac{\delta^2_h}{1-\delta_h^2}$.
However, the area for the patch $\varrho_h(e)$ is given by
\begin{equation*}
|\varrho_h(e)| = \int_{\phi_h^2(e)} \sqrt{1+f_1^2 + f_2^2} dxdy \leq \left(1+ \tfrac{\delta_h}{\sqrt{1-\delta_h^2}}\right)|\phi_h^2(e)|.
\end{equation*}
This shows the second assertion. Then $\epsilon_h$ can be properly chosen to satisfy both (i) and (ii).
\end{proof}

We shall also need the following lemma.
\begin{lem}\label{lem:reg1}
Let $p\geq 1$ and the domain $\Omega$ divide into $n$ disjoint open subdomains $\{\Omega_i\}$, with a Lipschitz
interface between every neighboring subdomains. Then
$u|_{\Omega_i}\in W^{1,p} (\Omega_i)$ and $u\in C(\overline{\Omega})$ imply $u\in W^{1,p}(\Omega)$.
\end{lem}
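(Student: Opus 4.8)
The plan is to show that the piecewise weak gradient of $u$ serves as its global weak gradient on $\Omega$. Concretely, define a vector field $g\in L^p(\Omega)^d$ by setting $g|_{\Omega_i}=\nabla(u|_{\Omega_i})$, the weak gradient of the restriction $u|_{\Omega_i}\in W^{1,p}(\Omega_i)$. Since each piece lies in $L^p(\Omega_i)$ and the interfaces $\cup_{i\neq k}(\partial\Omega_i\cap\partial\Omega_k)$ form a finite union of Lipschitz hypersurfaces, hence a set of $d$-dimensional Lebesgue measure zero, the field $g$ is well defined almost everywhere and $\|g\|_{L^p(\Omega)}^p=\sum_i\|\nabla u\|_{L^p(\Omega_i)}^p<\infty$. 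Since $u\in C(\overline\Omega)\subset L^p(\Omega)$ as well, it suffices to verify that $g$ is the distributional gradient of $u$, i.e. that $\int_\Omega u\,\partial_j\varphi\,dx=-\int_\Omega g_j\varphi\,dx$ for every $\varphi\in C_0^\infty(\Omega)$ and each $j$.

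First I would localize this identity to the subdomains. Fix $\varphi\in C_0^\infty(\Omega)$. Because the interfaces have measure zero, $\int_\Omega u\,\partial_j\varphi\,dx=\sum_i\int_{\Omega_i}u\,\partial_j\varphi\,dx$. On each Lipschitz subdomain $\Omega_i$ the restriction $u|_{\Omega_i}$ belongs to $W^{1,p}(\Omega_i)$, so the Gauss--Green formula applies and gives $\int_{\Omega_i}u\,\partial_j\varphi\,dx=-\int_{\Omega_i}(\partial_ju)\varphi\,dx+\int_{\partial\Omega_i}\gamma_i(u)\,\varphi\,n_j^{(i)}\,ds$, where $\gamma_i(u)$ denotes the $W^{1,p}$-trace of $u$ on $\partial\Omega_i$ and $n^{(i)}$ the outward unit normal. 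Summing over $i$ produces the bulk term $-\int_\Omega g_j\varphi\,dx$ together with a sum of boundary integrals. The portions of $\partial\Omega_i$ lying on the outer boundary $\partial\Omega$ contribute nothing, since $\varphi$ vanishes in a neighborhood of $\partial\Omega$; hence only the interface integrals survive.

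The crux is the cancellation of the interface contributions. Each interface $\Gamma_{ik}=\partial\Omega_i\cap\partial\Omega_k$ is traversed twice, once from $\Omega_i$ and once from $\Omega_k$, and the two outward normals are opposite, $n^{(k)}=-n^{(i)}$ on $\Gamma_{ik}$. Thus the two boundary integrals combine into $\int_{\Gamma_{ik}}(\gamma_i(u)-\gamma_k(u))\,\varphi\,n_j^{(i)}\,ds$, which vanishes provided the one-sided traces agree, $\gamma_i(u)=\gamma_k(u)$ on $\Gamma_{ik}$. This is exactly where the hypothesis $u\in C(\overline\Omega)$ enters: since $u$ is continuous up to $\overline{\Omega_i}$ and simultaneously lies in $W^{1,p}(\Omega_i)$, its $W^{1,p}$-trace coincides with the pointwise boundary values $u|_{\partial\Omega_i}$, so the common continuous value of $u$ on $\Gamma_{ik}$ forces $\gamma_i(u)=u|_{\Gamma_{ik}}=\gamma_k(u)$. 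With every interface term cancelling, the identity $\int_\Omega u\,\partial_j\varphi\,dx=-\int_\Omega g_j\varphi\,dx$ holds, so $g$ is the distributional gradient of $u$; combined with $u,g\in L^p(\Omega)$ this yields $u\in W^{1,p}(\Omega)$.

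I expect the main obstacle to be the rigorous identification of the abstract $W^{1,p}$-trace $\gamma_i(u)$ with the pointwise continuous restriction $u|_{\partial\Omega_i}$, as this is precisely what guarantees the interface cancellation. I would handle it by the standard density argument: approximate $u|_{\Omega_i}$ in $W^{1,p}(\Omega_i)$ by smooth functions, for which trace and restriction coincide trivially, and pass to the limit using the continuity of the trace operator together with the uniform continuity of $u$ on the compact set $\overline{\Omega_i}$. The applicability of the Gauss--Green formula on Lipschitz domains and the measure-zero property of the interfaces are classical and introduce no essentially new difficulty.
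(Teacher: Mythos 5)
Your proof is correct and takes essentially the same route as the paper's: define the candidate gradient piecewise, integrate by parts on each Lipschitz subdomain, and use the continuity of $u$ across the interfaces to cancel the jump (trace) terms, so the piecewise gradient is the global distributional gradient. The paper's proof is simply a terser version of this argument, reduced to the case $n=2$, with your trace-identification step left implicit.
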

\begin{proof} It suffices to consider the case $n=2$, i.e., two domains $\Omega_1$ and $\Omega_2$ with
the interface being $\Gamma_{1,2}$. We define functions $v_j$, $j=1,...,d$, by $v_j(x) =
\frac{\partial u}{\partial x_j} \chi_{\Omega_1\cup\Omega_2}$. Then for any $\phi\in C_0^\infty(\Omega)$, we have
\begin{equation*}
\int_{\Omega}v_j\phi dx = \int_{\Omega_1} v_j\phi dx+ \int_{\Omega_2} v_j\phi dx= -\int_{\Omega_1} u\phi_{x_j}dx - \int_{\Omega_2} u\phi_{x_j}dx + \int_{\Gamma_{1,2}}[u\phi] n_{x_j}ds,
\end{equation*}
where $[\cdot]$ denotes the jump across the interface $\Gamma_{1,2}$
and $n_{x_j}$ is the $j$th component of the unit outward normal vector to the boundary
$\partial\Omega_1$. By the continuity of $u$, the jump term on the interface
$\Gamma_{1,2}$ vanishes identically, and thus
$\int_{\Omega}v_j\phi dx= - \int_{\Omega} u\phi_{x_j}dx,$
i.e., $v_j$ is the weak derivative of $u$. Clearly, the function $v_j$ belongs to
the space $L^p(\Omega)$. This concludes the proof of the lemma.
\end{proof}

The finite element solution is only defined on the domain $\Omega_h$, whereas
the true solution is defined on the domain $\Omega$. In order to compare them,
we introduce an extension operator $\jmath:\Omega_h\mapsto\Omega$ as follows:
\begin{equation}\label{equ:extension}
\jmath {v}_h(x) = \left\{\begin{aligned}
v_h(x), &\quad x\in\overline{\Omega}_h, \\
v_h(\phi_h(x)), &\quad x\in \overline{\Omega} \backslash \overline{\Omega}_h, \end{aligned}\right.
\forall v_h\in V_h.
\end{equation}

The extension operator $\jmath$ satisfies the following estimate.
\begin{lem}\label{lem:femext}
Let $p\geq1$ and Assumption \ref{ass:poly} be fulfilled. Then there holds
\begin{equation*}
   \|\jmath v_h\|_{W^{1,p}(\Omega\backslash\Omega_h)}\leq C\epsilon_h^\frac{1}{p} \|v_h\|_{W^{1,p}(\Omega_h)}\quad\forall v_h\in V_h.
\end{equation*}
\end{lem}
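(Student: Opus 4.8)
The plan is to split the norm as $\|\jmath v_h\|_{W^{1,p}(\Omega\backslash\Omega_h)}^p = \|\jmath v_h\|_{L^p(\Omega\backslash\Omega_h)}^p + \|\nabla \jmath v_h\|_{L^p(\Omega\backslash\Omega_h)}^p$ and to bound each piece by $C\epsilon_h\|v_h\|_{W^{1,p}(\Omega_h)}^p$ separately, after which the claim follows by taking $p$-th roots. Both terms are treated by the decomposition $\Omega\backslash\Omega_h \subset \cup_{S_h}(\phi_h^1)^{-1}(S_h)$, working on one preimage at a time and exploiting Lemma \ref{ass:map}(i), which supplies the crucial measure bound $|(\phi_h^1)^{-1}(S_h)| \leq \epsilon_h|T_h|$.

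For the gradient term I would first record the geometric fact that on $(\phi_h^1)^{-1}(S_h)$ the map $\phi_h^1$ coincides with the orthogonal projection onto the affine plane carrying the flat face $S_h$; this is precisely the structure underlying the definition of $\psi_h$ via the normal $n_{x_h}$ in the construction preceding Lemma \ref{lem:assump}. Since $v_h|_{T_h}\in P_1(T_h)$, its restriction to $S_h$ is affine with constant tangential gradient, and the chain rule gives $\nabla(\jmath v_h) = \nabla_{\mathrm{tan}}(v_h|_{S_h})$, whose magnitude is dominated by the full element gradient because projection onto the tangent plane is nonexpansive. Hence $\nabla \jmath v_h$ is constant on each preimage and controlled by $\|\nabla v_h\|_{L^\infty(T_h)}$, so that
\begin{equation*}
\int_{(\phi_h^1)^{-1}(S_h)}|\nabla \jmath v_h|^p\,dx \leq |(\phi_h^1)^{-1}(S_h)|\,\|\nabla v_h\|_{L^\infty(T_h)}^p \leq \epsilon_h|T_h|\,\|\nabla v_h\|_{L^\infty(T_h)}^p = \epsilon_h\int_{T_h}|\nabla v_h|^p\,dx,
\end{equation*}
where I used Lemma \ref{ass:map}(i) and the identity $\|\nabla v_h\|_{L^\infty(T_h)}^p|T_h|=\int_{T_h}|\nabla v_h|^p\,dx$. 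Summing over the boundary faces $S_h$, with each element $T_h$ appearing only a bounded number of times, yields $\|\nabla \jmath v_h\|_{L^p(\Omega\backslash\Omega_h)}^p \leq C\epsilon_h\|\nabla v_h\|_{L^p(\Omega_h)}^p$.

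For the $L^p$ term, on $(\phi_h^1)^{-1}(S_h)$ one has $\jmath v_h(x)=v_h(\phi_h^1(x))$ with $\phi_h^1(x)\in S_h$, so $|\jmath v_h|\leq \|v_h\|_{L^\infty(S_h)}\leq \|v_h\|_{L^\infty(T_h)}$. Here the standard inverse inequality on a quasi-uniform mesh, $\|v_h\|_{L^\infty(T_h)}\leq C|T_h|^{-1/p}\|v_h\|_{L^p(T_h)}$, is the key tool. Combining it with Lemma \ref{ass:map}(i) and the quasi-uniformity scaling $|T_h|\approx h^d$,
\begin{equation*}
\int_{(\phi_h^1)^{-1}(S_h)}|\jmath v_h|^p\,dx \leq |(\phi_h^1)^{-1}(S_h)|\,\|v_h\|_{L^\infty(T_h)}^p \leq \epsilon_h|T_h|\cdot C|T_h|^{-1}\|v_h\|_{L^p(T_h)}^p = C\epsilon_h\|v_h\|_{L^p(T_h)}^p,
\end{equation*}
so the two powers of $|T_h|$ cancel. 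Summing over $S_h$ gives $\|\jmath v_h\|_{L^p(\Omega\backslash\Omega_h)}^p\leq C\epsilon_h\|v_h\|_{L^p(\Omega_h)}^p$. Adding the two bounds and taking $p$-th roots delivers the lemma. The main obstacle I anticipate is the gradient step: making rigorous that $\phi_h^1$ is an orthogonal affine projection onto the face plane on each preimage, so that the composition's gradient is \emph{exactly} the tangential gradient and is therefore dominated by $\|\nabla v_h\|_{L^\infty(T_h)}$ with no loss. The $L^p$ part is essentially bookkeeping once the inverse inequality is invoked and the scalings $|(\phi_h^1)^{-1}(S_h)|\leq\epsilon_h|T_h|$ and $|T_h|\approx h^d$ are used to cancel the powers of $h$.
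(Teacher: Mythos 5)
Your proposal is correct and follows essentially the same route as the paper: decompose $\Omega\backslash\Omega_h$ into the preimages $(\phi_h^1)^{-1}(S_h)$, bound $|\jmath v_h|$ and $|\nabla\jmath v_h|$ pointwise by $\|v_h\|_{L^\infty(T_h)}$ and $\|\nabla v_h\|_{L^\infty(T_h)}$, convert to $L^p$ via the inverse inequality $\|\cdot\|_{L^\infty(T_h)}^p\approx|T_h|^{-1}\|\cdot\|_{L^p(T_h)}^p$ on the quasi-uniform mesh, and invoke Lemma \ref{ass:map}(i). The one obstacle you flag (rigor of the gradient bound) is resolved in the paper more simply than you anticipate: one does not need $\phi_h^1$ to be an orthogonal affine projection onto the face plane, only that the closest-point projection onto the convex set $\overline{\Omega}_h$ is nonexpansive, so the difference quotient of $v_h\circ\phi_h$ is dominated by that of $v_h$ on $S_h$.
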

\begin{proof} Clearly, $\jmath v_h\in C(\overline{\Omega})$ and the projection
operator $\phi_h$ is nonexpansive, i.e., $|\phi_h(x)-\phi_h(y)| \leq |x-y|$. Thus for
any $x\in\Omega\backslash\overline{\Omega}_h$ with $\phi_h(x) \in S_h\subset T_h$,
$T_h\in\mathcal{T}_h$, there holds
\begin{equation*}
  \begin{aligned}
     |\nabla \jmath v_h(x)| &= \limsup_{y\rightarrow x}\frac{|\jmath v_h(x) - \jmath v_h(y) | }{|x-y|} \\
      & \leq \limsup_{\substack{y\rightarrow x\\
         \phi_h(y)\neq\phi_h(x)}}\frac{|v_h(\phi_h(x)) - v_h(\phi_h(y))|}{|\phi_h(x) - \phi_h(y)|} \leq \|\nabla v_h\|_{L^\infty(S_h)},
  \end{aligned}
\end{equation*}
and
\begin{equation*}
|\jmath v_h(x)| = |v_h(\phi_h(x))| \leq \|v_h\|_{L^\infty(S_h)}.
\end{equation*}
Further, $\|\nabla v_h\|_{L^\infty(S_h)}\leq \|\nabla v_h\|_{L^\infty(T_h)}$ and
$\|v_h\|_{L^\infty(S_h)}\leq \|v_h\|_{L^\infty(T_h)}$, and thus,
\begin{equation}\label{ine:extvh}
\begin{aligned}\|\nabla \jmath v_h\|_{L^p((\phi_h^1)^{-1}(S_h))}^p 
  &\leq |(\phi_h^1)^{-1}(S_h)|\|\nabla v_h\|^p_{L^\infty(T_h)},\\
  \|\jmath v_h\|_{L^p((\phi_h^1)^{-1}(S_h))}^p 
  &\leq |(\phi_h^1)^{-1}(S_h)|\|v_h\|^p_{L^\infty(T_h)}.
\end{aligned}
\end{equation}
Now for the element $T_h\in\mathcal{T}_h$, we consider an affine transformation
$\mathcal{F}:\widehat{T} \mapsto T_h$, $\mathcal{F}(\hat{x}) = J \widehat{x} + b$,
where $\widehat{T}$ is the reference element. The quasi-uniformity of the
triangulation $\mathcal{T}_h$ implies \cite{Ciarlet:2002}
\begin{eqnarray}\label{ine:J}
|\mbox{det}(J)| = |T_h|/|\widehat{T}|
\approx h^d, \quad \|J\| \approx h, \quad \|J^{-1}\| \approx
h^{-1}.
\end{eqnarray}
where $\approx$ means being of the same order, and $\|\cdot\|$ is the matrix spectral norm.
Then by a change of variable, chain rule
and \eqref{ine:J}, we deduce that for any $s\geq0$
\begin{equation*}
|\widehat{v}|_{W^{s,p}(\widehat{T})} \approx \left\{ \begin{aligned} h^{s-\frac{d}{p}}
|v|_{W^{s,p}(T_h)}, &\quad 1\leq p <\infty,\\
h^{s} |v|_{W^{s,p}(T_h)}, &\quad p= \infty.
\end{aligned}\right.
\end{equation*}
Consequently, we have
\begin{equation*}
  \begin{aligned}
     |v_h|_{W^{1,\infty}(T_h)}^p & \approx h^{-p}|\widehat{v}_h|_{W^{1,\infty}(\widehat{T})}^p \approx h^{-p}|\widehat{v}_h|_{W^{1,p}(\widehat{T})}^p \\
     &\approx h^{-d}|v_h|_{W^{1,p}(T_h)}^p \approx \tfrac{1}{|T_h|}|v_h|_{W^{1,p}(T_h)}^p,\\
     |v_h|_{L^\infty(T_h)}^p &\approx |\widehat{v}_h|_{L^\infty(\widehat{T})}^p \approx |\widehat{v}_h|_{L^p(\widehat{T})}^p \\
     &\approx h^{-d}|v_h|_{L^p(T_h)}^p \approx \tfrac{1}{|T_h|}|v_h|_{L^p(T_h)}^p.
  \end{aligned}
\end{equation*}
This together with \eqref{ine:extvh} yields
\begin{eqnarray*}
&&\|\nabla \jmath v_h\|_{L^p((\phi_h^1)^{-1}(S_h))}^p \leq C\frac{|(\phi_h^1)^{-1}(S_h)|}{|T_h|}\|\nabla v_h\|_{L^p(T_h)}^p,\\
&&\| \jmath v_h\|_{L^p((\phi_h^1)^{-1}(S_h))}^p \leq C\frac{|(\phi_h^1)^{-1}(S_h)|}{|T_h|}\| v_h\|_{L^p(T_h)}^p .
\end{eqnarray*}
Now summing over all $S_h$ and part (i) of Lemma \ref{ass:map} yield the desired assertion.
\end{proof}

The next result estimates the error of the boundary term.
\begin{lem}\label{lem:femextbry}
Let Assumption \ref{ass:poly} be fulfilled. Then for any $v_h\in {V}_h$, a subset $e_h\subset \mathrm{int}S_h$,
and $\tilde{e}_h = \psi_h(e_h)$, there holds
\begin{equation*}
\left|\int_{e_h} v_hds - \int_{\tilde{e}_h} \jmath v_hds \right| \leq \epsilon_h \|v_h\|_{L^1(e_h)}.
\end{equation*}
\end{lem}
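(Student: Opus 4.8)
The plan is to pull the integral over $\tilde{e}_h$ back to $e_h$ along the normal map $\psi_h$ and to exploit that $\jmath v_h$ is constant along normal rays, so that everything reduces to controlling the surface Jacobian of $\psi_h$. First I would fix the face $S_h$ and choose local coordinates exactly as in the proof of Lemma~\ref{ass:map}, so that $S_h$ lies in the $x_1$-$x_2$ plane with unit normal $(0,0,1)^{\mathrm t}$. In these coordinates the patch $\tilde e_h=\psi_h(e_h)\subset\Gamma$ is the graph $(x_1,x_2)\mapsto(x_1,x_2,f(x_1,x_2))$ over the footprint $e_h$, since following the normal of $S_h$ from a point $(x_1,x_2,0)\in e_h$ means moving in the $x_3$ direction until hitting $\Gamma$. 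Consequently the surface element on $e_h$ is $ds_{x_h}=dx_1\,dx_2$, while that on $\tilde e_h$ is $\sqrt{1+f_1^2+f_2^2}\,dx_1\,dx_2$; i.e.\ $\psi_h$ has surface Jacobian $J=\sqrt{1+f_1^2+f_2^2}$.

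Next I would identify the integrand after pullback. For $x=\psi_h(x_h)\in\tilde e_h\subset\Gamma\subset\overline\Omega\setminus\overline\Omega_h$, the extension \eqref{equ:extension} gives $\jmath v_h(x)=v_h(\phi_h(x))=v_h(\phi_h^2(x))$, and by the identity $\phi_h^2(\psi_h(x_h))=x_h$ established when $\psi_h$ was introduced, this equals $v_h(x_h)$. Hence $\jmath v_h\circ\psi_h=v_h$ on $e_h$, and the change of variables yields
\begin{equation*}
  \int_{\tilde e_h}\jmath v_h\,ds=\int_{e_h}v_h(x_h)\,J(x_h)\,ds_{x_h}.
\end{equation*}
Subtracting $\int_{e_h}v_h\,ds$ and estimating pointwise then gives
\begin{equation*}
  \left|\int_{e_h}v_h\,ds-\int_{\tilde e_h}\jmath v_h\,ds\right|
  \le\Big(\sup_{x_h\in e_h}|1-J(x_h)|\Big)\,\|v_h\|_{L^1(e_h)}.
\end{equation*}

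Finally I would bound the Jacobian deviation. Writing $\delta_h=\sup_x|n_x-n_{\phi_h^2(x)}|$ as in the proof of Lemma~\ref{ass:map}, the normal estimate Lemma~\ref{lem:assump}(ii) gives $\delta_h\le Ch$, and the same computation there shows $f_1^2+f_2^2\le\delta_h^2/(1-\delta_h^2)$ on $\mathrm{int}\,S_h$. Therefore $1\le J=\sqrt{1+f_1^2+f_2^2}\le(1-\delta_h^2)^{-1/2}$, so that $|1-J|\le(1-\delta_h^2)^{-1/2}-1\to0$ uniformly in $x_h$ as $h\to0$; absorbing this bound into the sequence $\epsilon_h$ from Lemma~\ref{ass:map} yields the claimed inequality. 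The only step needing care is the legitimacy of the change of variables, namely that $\psi_h$ restricted to $\mathrm{int}\,S_h$ is a Lipschitz bijection onto its image with precisely the graph Jacobian above; this is guaranteed by the convexity of $\Omega$ (uniqueness of the normal intersection with $\Gamma$) together with the $C^2$ regularity of $\Gamma$, exactly as in the construction preceding Lemma~\ref{lem:assump}. No genuinely hard obstacle arises: the work is entirely the local graph bookkeeping and the reuse of the geometric estimates already established.
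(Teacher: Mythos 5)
Your proof is correct and follows essentially the same route as the paper's: both rest on the identity $\jmath v_h\circ\psi_h=v_h$ on $e_h$ together with the $O(h)$ area distortion of $\psi_h$, absorbed into $\epsilon_h$. The only cosmetic difference is packaging — the paper compares Riemann sums and cites the measure estimate $\bigl||\phi_h^2(p_i)|-|p_i|\bigr|\le\epsilon_h|\phi_h^2(p_i)|$ from Lemma~\ref{ass:map}(ii), whereas you perform an explicit change of variables and re-derive the graph Jacobian bound $|1-\sqrt{1+f_1^2+f_2^2}|\le\epsilon_h$, which is precisely the computation already carried out inside the proof of Lemma~\ref{ass:map}(ii).
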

\begin{proof} By the continuity of the functions $v_h$ and $\jmath v_h$,
Riemann and Lebesgue integration coincides. We show the assertion using the definition of Riemann integration.
Let $\cup_i p_{i}$ be a partition of $\tilde{e}_h$. Then $\cup_i \phi_h^2(p_i)$ forms a partition of $e_h$.
Since the map $\phi_h^2$ is nonexpansive, the norm of the partition $\{\phi_h^2(p_i)\}$ tends to zero as
that of $\{p_i\}$ goes to zero. For any choice of $\{x_i\in p_i\}$, there holds
\begin{equation*}
  \begin{aligned}
    \left|\int_{e_h} v_hds - \int_{\tilde{e}_h} \jmath v_hds\right| &= \lim_{n\rightarrow \infty} \left|\sum_{i} (v_h(\phi_h^2(x_i))|\phi_h^2(p_i)| - \jmath v_h(x_i)|p_i|)\right| \\
    &= \lim_{n\rightarrow \infty} \left|\sum_{i} v_h(\phi_h^2(x_i))(|\phi_h^2(p_i)| -|p_i|)\right| .
  \end{aligned}
\end{equation*}
By part (ii) of Lemma \ref{ass:map}, there holds $\left||\phi_h^2(p_i)| - |p_i|\right|\leq \epsilon_h
|\phi_h^2(p_i)|$, which concludes the proof.
\end{proof}
\begin{rmk}
An inspection of the proof indicates that the lemma is valid for any function continuous over the domain $\overline{\Omega}_h$.
\end{rmk}

We shall also need a Riesz projection $\mathcal{R}_h$, which is dependent of the domain $\Omega_h$.
\begin{lem}\label{lem:riesz}
Let the Riesz projection $\mathcal{R}_h: H^1(\Omega)\mapsto V_h$ be defined by
\begin{equation*}
  \int_{\Omega_h} \nabla \mathcal{R}_hv\cdot\nabla v_hdx +\int_{\Omega_h} \mathcal{R}_hvv_hdx
  = \int_{\Omega_h} \nabla v\cdot\nabla v_hdx + \int_{\Omega_h} vv_hdx\quad \forall v_h\in V_h.
\end{equation*}
Then the operator $\mathcal{R}_h$ satisfies the following estimate
\begin{equation*}
  \lim_{h\to0}\|\jmath\mathcal{R}_hv-v\|_{H^1(\Omega)}=0\quad \forall v\in H^1(\Omega).
\end{equation*}
\end{lem}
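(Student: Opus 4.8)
The plan is to prove $\lim_{h\to0}\|\jmath\mathcal{R}_hv-v\|_{H^1(\Omega)}=0$ by splitting the domain $\Omega$ into the interior polyhedral part $\Omega_h$ and the thin boundary layer $\Omega\backslash\Omega_h$, and controlling the two contributions separately. The starting observation is that on $\Omega_h$ the Riesz projection $\mathcal{R}_h$ defined here is the standard $H^1(\Omega_h)$-projection, so the classical approximation estimate \eqref{eqn:basicest} gives $\|\mathcal{R}_hv-v\|_{H^1(\Omega_h)}\to0$ for every $v\in H^1(\Omega)$; the main content is therefore to show that the extension across the boundary layer is negligible and that $v$ itself carries negligible mass on $\Omega\backslash\Omega_h$.

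The key steps, in order, are as follows. First I would write
\begin{equation*}
  \|\jmath\mathcal{R}_hv-v\|_{H^1(\Omega)}^2 = \|\mathcal{R}_hv-v\|_{H^1(\Omega_h)}^2 + \|\jmath\mathcal{R}_hv-v\|_{H^1(\Omega\backslash\Omega_h)}^2,
\end{equation*}
using that $\jmath\mathcal{R}_hv=\mathcal{R}_hv$ on $\Omega_h$ by the definition \eqref{equ:extension} of $\jmath$. The first summand tends to zero by the remark above. For the second summand I would apply the triangle inequality to get $\|\jmath\mathcal{R}_hv-v\|_{H^1(\Omega\backslash\Omega_h)}\le \|\jmath\mathcal{R}_hv\|_{H^1(\Omega\backslash\Omega_h)}+\|v\|_{H^1(\Omega\backslash\Omega_h)}$. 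The second of these goes to zero because $|\Omega\backslash\Omega_h|\to0$ (Lemma \ref{lem:assump}(i) forces the boundary layer to shrink, so its measure vanishes) and $v\in H^1(\Omega)$ has an absolutely continuous integral. For the first, Lemma \ref{lem:femext} with $p=2$ gives the decisive bound
\begin{equation*}
  \|\jmath\mathcal{R}_hv\|_{H^1(\Omega\backslash\Omega_h)}\le C\epsilon_h^{1/2}\|\mathcal{R}_hv\|_{H^1(\Omega_h)},
\end{equation*}
and since $\|\mathcal{R}_hv\|_{H^1(\Omega_h)}$ is uniformly bounded (it is the projection of $v$, so bounded by $\|v\|_{H^1(\Omega)}$) while $\epsilon_h\to0$ by Lemma \ref{ass:map}, this term vanishes as $h\to0$.

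I would carry out the argument in that sequence: cite \eqref{eqn:basicest} for the interior, invoke Lemma \ref{lem:femext} for the extension, and use $|\Omega\backslash\Omega_h|\to0$ together with the uniform $H^1(\Omega_h)$-bound on $\mathcal{R}_hv$ to close the estimate. The main technical subtlety, and the step I expect to require the most care, is justifying $\|\mathcal{R}_hv\|_{H^1(\Omega_h)}\le C\|v\|_{H^1(\Omega)}$ uniformly in $h$: because $\mathcal{R}_h$ is defined by a bilinear form over the \emph{varying} domain $\Omega_h$, one must check that the projection is stable with a constant independent of $h$. This follows by testing the defining identity with $v_h=\mathcal{R}_hv$ and applying Cauchy–Schwarz, which yields $\|\mathcal{R}_hv\|_{H^1(\Omega_h)}\le\|v\|_{H^1(\Omega_h)}\le\|v\|_{H^1(\Omega)}$, the first inequality being the coercivity/continuity of the $H^1(\Omega_h)$ inner product with constant one. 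A secondary point worth verifying is that the interior estimate $\|\mathcal{R}_hv-v\|_{H^1(\Omega_h)}\to0$ genuinely holds on the shrinking domains $\Omega_h$ rather than a fixed domain; here one can use that $\Omega_h\subset\Omega$ and that the standard interpolation/projection estimates hold uniformly for quasi-uniform shape-regular meshes, so restricting $v$ to $\Omega_h$ and projecting introduces no new $h$-dependence beyond the usual approximation error.
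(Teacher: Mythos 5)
Your proposal is correct and follows essentially the same route as the paper: the same orthogonal splitting of $\|\jmath\mathcal{R}_hv-v\|_{H^1(\Omega)}^2$ over $\Omega_h$ and $\Omega\backslash\Omega_h$, Lemma \ref{lem:femext} with $p=2$ plus the unit-norm stability of $\mathcal{R}_h$ for the extension term, and the vanishing of $\|v\|_{H^1(\Omega\backslash\Omega_h)}$ by absolute continuity of the integral. The only point where the paper is more explicit is the interior estimate on the $h$-dependent domain, which it settles via C\'{e}a's lemma combined with a density argument (approximating $v$ by $v^\epsilon\in C^\infty(\overline{\Omega})$ and interpolating $v^\epsilon$, with an interpolation constant independent of $\Omega_h$) --- precisely the subtlety you flag at the end, so your sketch closes in the same way.
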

\begin{proof}
Clearly, by the definition of the space $H^1(\Omega)$, there holds
\begin{equation*}
\|\jmath\mathcal{R}_hv-v\|^2_{H^1(\Omega)} = \|\mathcal{R}_hv-v\|^2_{H^1(\Omega_h)} + \|\jmath\mathcal{R}_hv-v\|^2_{H^1(\Omega\backslash\Omega_h)}.
\end{equation*}
It suffices to estimate the two terms. We estimate the first term by a density argument.
By C\'{e}a's lemma and the definition of Riesz projection, $\|\mathcal{R}_hv-v\|_{H^1(\Omega_h)}\leq
\inf_{v_h\in V_h}\|v_h-v\|_{H^1(\Omega_h)}$. Then following
\cite[Theorem 3.2.3]{Ciarlet:2002} we deduce that for any
$v\in C^\infty(\overline{\Omega})$, there holds
\begin{equation*}
\|v - \mathcal{I}_h v\|_{H^1(\Omega_h)} \leq C h |\Omega_h|^{\frac{1}{2}} \|v\|_{W^{2,\infty}(\Omega_h)},
\end{equation*}
where $C$ does not depend on $\Omega_h$.
Now for any fixed $v\in H^1(\Omega)$, by the density of $C^\infty(\overline{\Omega})$ in $H^1(\Omega)$,
there exists $v^\epsilon\in C^\infty(\overline{\Omega})$ with $\|v^\epsilon - v\|_{H^1(\Omega)}\leq
\epsilon$ for any $\epsilon>0$. Hence with the choice $v_h=\mathcal{I}_h v^\epsilon$ in C\'{e}a's lemma, there holds
$\lim_{h\to0}\|\mathcal{R}_h v - v\|_{H^1(\Omega_h)} = 0$.
Meanwhile, by the triangle inequality we have
\begin{equation*}
   \begin{aligned}
    \|\jmath\mathcal{R}_hv-v\|_{H^1(\Omega\backslash\Omega_h)} \leq& \|\jmath\mathcal{R}_hv\|_{H^1(\Omega\backslash\Omega_h)} + \|v\|_{H^1(\Omega\backslash\Omega_h)}\\
    \leq & C\epsilon_h^\frac{1}{2}\|\mathcal{R}_hv\|_{H^1(\Omega_h)} + \|v\|_{H^1(\Omega\backslash\Omega_h)} \rightarrow 0,
   \end{aligned}
\end{equation*}
where the last term tends to zero by Lebesgue dominated convergence theorem \cite{Evans:1992a}.
\end{proof}

Next we establish an analogue of Lemma \ref{lem:polyhed} for
the solution $(u_h,U_{h})\in \mathbb{H}_h:=H^1(\Omega_h)\otimes\mathbb{R}_\diamond^L$
to the discrete variational problem (on the polyhedral domain $\Omega_h$):
\begin{equation}\label{equ:dis}
 \int_{\Omega_h}\sigma_h\nabla u_h\cdot\nabla v_h dx+\sum_{l=1}^L z_l^{-1}\int_{e_{l,h}}
 (u_h-U_{h,l})(v_h-V_l)ds =\sum_{l=1}^LI_lV_l, \quad \forall (v_h,V)\in \mathbb{H}_h.
\end{equation}
There are several possible choices of the discrete surface $e_{h,l}$, which is a polyhedral approximation
to the surface patch $e_{l}$ (occupied by the electrode). A straightforward definition of $e_{h,l}$ would
be $e_{l,h}=\phi_h(e_l)$. Here we let $e_{h,l}=\phi_h(\varrho_h(e_l))$, with $\varrho_h(e_l)$ given in \eqref{equ:e1}.
We note that in practice, the surface $e_{h,l}$ can be chosen to be the union of a collection of polyhedral
surfaces only so as to avoid integration over a curved surface; and the analysis below remains valid for this case.
Next we denote $\tilde{e}_{l,h}=\psi_h(e_{l,h})\subset e_l$. By Assumption \ref{ass:poly}(b), the measure
$|e_l\backslash{\tilde{e}_{l,h}}|$ is bounded by $|\Gamma|-|\Gamma_h|$, with a limit zero as $h$ goes to
zero. By the Lax-Milgram theorem, for each fixed $\sigma_h$, there exists a unique solution $(u_h,U_h)\in\mathbb{H}_h$ to
the discrete variational problem \eqref{equ:dis}, and it satisfies the following a priori error estimate
$\|(u_h,U_h)\|_{\mathbb{H}_h}\leq C\|I\|$. Hence by Lemma \ref{lem:femext}, the sequence $\{(\jmath u_h,
U_h)\}$ is uniformly bounded in $\mathbb{H}$ independent of $h$.
Now we can state an important lemma on the convergence of the discrete forward map
$\sigma_h\mapsto (\jmath u_h(\sigma_h),U_h(\sigma_h))\in \mathbb{H}_h$.
\begin{lem}\label{lem:discon}
Let $\{\sigma_h\}\subset \mathcal{A}$ and $(u_h(\sigma_h), U_{h}(\sigma_h))$ solve the discrete
variational problem \eqref{equ:dis}. If $\jmath\sigma_h$ converges to $\sigma\in \mathcal{A}$ in
$L^1(\Omega)$, then the sequence $\{(\jmath u_h(\sigma_h),U_h(\sigma_h))\}$ converges to
$(u(\sigma),U(\sigma))$ in $\mathbb{H}$.
\end{lem}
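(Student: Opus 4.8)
The plan is to mimic the structure of the proof of Lemma~\ref{lem:polyhed}, but now carefully accounting for the mismatch between the computational domain $\Omega_h$ and the true domain $\Omega$. The strategy is an energy estimate: I would test the continuous weak formulation \eqref{eqn:cemweakform} and the discrete one \eqref{equ:dis} with a common test function built from the Riesz projection $\mathcal{R}_h$ of Lemma~\ref{lem:riesz}, subtract, and bound the resulting difference. The natural test function is $(\mathcal{R}_h u - u_h, U - U_h) \in \mathbb{H}_h$, extended to $\Omega$ via $\jmath$ when comparing against the continuous equation. The goal is to control $\|\nabla(\jmath u_h - u)\|_{L^2(\Omega_h)}$ together with the boundary mismatch terms $\sum_l \|(\jmath u_h - u) - (U_{h,l}-U_l)\|_{L^2(e_l)}$, and then invoke the norm equivalence of Lemma~\ref{lem:normequiv} to conclude convergence in $\mathbb{H}$.

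\textbf{The key steps, in order, would be as follows.} First, I would split every integral over $\Omega$ into an integral over $\Omega_h$ plus a remainder over $\Omega\backslash\Omega_h$; by Lemma~\ref{lem:femext} the latter is of order $\epsilon_h^{1/p}$ times a bounded norm of $u_h$, hence vanishes. Second, on $\Omega_h$ I would form the difference of the two bilinear forms. The volume term decomposes exactly as in Lemma~\ref{lem:polyhed} into a term $\int_{\Omega_h}(\sigma - \sigma_h)\nabla u\cdot\nabla(\mathcal{R}_h u - u_h)$, handled by Theorem~\ref{thm:cemreg} (giving $\|\nabla u\|_{L^q}\le C\|I\|$ with $q>2$), the generalized H\"older inequality, and the $L^1$-convergence hypothesis $\jmath\sigma_h\to\sigma$ upgraded to $L^p$ convergence using the uniform $L^\infty$ bound on $\mathcal{A}$ exactly as in Lemma~\ref{lem:contpoly}; plus a consistency term $\int_{\Omega_h}\sigma_h\nabla(u-u_h)\cdot\nabla(u-\mathcal{R}_h u)$ that vanishes by Lemma~\ref{lem:riesz}. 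Third, and this is the genuinely new ingredient, I would handle the boundary contributions: the continuous form integrates over $e_l\subset\Gamma$ while the discrete form integrates over $e_{l,h}\subset\Gamma_h$. I would use the map $\psi_h$ and Lemma~\ref{lem:femextbry} to transport the discrete boundary integral over $e_{l,h}$ to an integral over $\tilde e_{l,h}=\psi_h(e_{l,h})\subset e_l$, incurring only an $O(\epsilon_h)$ relative error, and then control the discrepancy between $\int_{e_l}$ and $\int_{\tilde e_{l,h}}$ using $|e_l\backslash\tilde e_{l,h}|\le |\Gamma|-|\Gamma_h|\to 0$ from Lemma~\ref{lem:assump}(iii). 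Finally, collecting the coercivity constant $\min(\lambda,\{z_l^{-1}\})$ on the left and the vanishing right-hand side, Lemma~\ref{lem:normequiv} yields $\|(\jmath u_h - u, U_h - U)\|_{\mathbb{H}}\to 0$.

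\textbf{The main obstacle} I anticipate is the bookkeeping at the boundary: the forcing term $\sum_l I_l V_l$ is identical in both formulations, but the dissipation terms live on geometrically distinct surfaces, so the test function $(\jmath u_h - u)$ restricted to $\Gamma$ does not lie in any clean finite element trace space, and one cannot simply subtract the two boundary integrals. The resolution is to interpose the intermediate surfaces $\tilde e_{l,h}$ and exploit that both $u$ (via trace regularity, using $u\in W^{1,q}$, $q>2$, from Theorem~\ref{thm:cemreg}) and $\jmath u_h$ are well-controlled in $L^2(e_l)$, so that replacing $e_{l,h}$ by $\tilde e_{l,h}$ and then $\tilde e_{l,h}$ by $e_l$ produces errors that are each $o(1)$. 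A secondary technical point is ensuring the extension $\jmath u_h$ has a meaningful trace on $\Gamma$ and that the normal directions along which $\jmath$ extends are compatible with the $\psi_h$ construction; this is exactly what Assumption~\ref{ass:poly} and the geometric estimates of Lemma~\ref{lem:assump}(ii) guarantee. Once these boundary terms are shown to vanish, the argument closes by coercivity precisely as in the polyhedral case.
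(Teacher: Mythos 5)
Your proposal follows essentially the same route as the paper's proof: the same test function $(\mathcal{R}_h u - u_h, U - U_h)$, the same decomposition into the conductivity-difference term (H\"older plus Theorem~\ref{thm:cemreg}), the Riesz-projection consistency term, the $\Omega\setminus\Omega_h$ remainder handled by Lemma~\ref{lem:femext}, and the boundary transport via $\psi_h$, Lemma~\ref{lem:femextbry}, and the smallness of $|e_l\setminus\tilde e_{l,h}|$. The only refinement worth noting is that for the residual boundary integral over $e_l\setminus\tilde e_{l,h}$, uniform $L^2(e_l)$ control of the factors is not quite enough (the product is then only bounded in $L^1$, and a small set does not kill an $L^1$-bounded integrand); the paper uses the trace embedding $H^1(\Omega)\hookrightarrow L^4(\Gamma)$ to place the product in $L^2(\Gamma)$ and then applies H\"older with $|e_l\setminus\tilde e_{l,h}|^{1/2}$.
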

\begin{proof}
For simplicity, we denote the extensions of $\sigma_h$, $u_h$ and $v_h$ from $\Omega_h$ to $\Omega$ by
$\tilde{\sigma}_h = \jmath\sigma_h$, $\tilde{u}_h = \jmath u_h(\sigma_h)$, $\tilde{v}_h = \jmath v_h$, and $\bar{u}_h=\jmath\mathcal{R}_hu$.
Then the assumption $\sigma_h \in\mathcal{A}_h\subset V_h$, i.e., $c_0\leq \sigma_h\leq c_1$,
and a priori estimate for $u_h$, Lemmas \ref{lem:femext} and
\ref{lem:reg1}, imply that the sequence $\{\tilde{u}_h(\sigma_h)\}$ is
uniformly bounded in $H^1(\Omega)$.

First we rewrite the discrete variational formulation \eqref{equ:dis} as
\begin{equation*}
  \begin{aligned}
   \int_{\Omega}\tilde{\sigma}_h\nabla \tilde{u}_h\cdot\nabla \tilde{v}_h dx+&\sum_{l=1}^L z_l^{-1}\int_{e_l}(\tilde{u}_h-U_{h,l})(\tilde{v}_h-V_l)ds
     =\int_{\Omega\setminus\Omega_h}\tilde{\sigma}_h\nabla \tilde{u}_h\cdot\nabla \tilde{v}_h dx + \sum_{l=1}^LI_lV_l\\
     &+\sum_{l=1}^L z_l^{-1}\int_{e_l}(\tilde{u}_h-U_{h,l})(\tilde{v}_h-V_l)ds - \sum_{l=1}^L z_l^{-1}\int_{e_{l,h}}(u_h-U_{h,l})(v_h-V_l)ds, \quad \forall (v_h,V)\in \mathbb{H}_h.
  \end{aligned}
\end{equation*}
and take the test function $(v_h,V)=(\mathcal{R}_hu-u_h,U-U_h)$. Next we subtract it from \eqref{eqn:cemweakform}
with the test function $(v,V)=(\bar{u}_h-\tilde{u}_h,U-U_h)$ to get the identity for the error $(w,W)=(u-\bar{u}_h,U-U_h)$:
\begin{equation*}
   \begin{aligned}
       \int_\Omega \tilde{\sigma}_h|\nabla w|^2dx &+ \sum_{l=1}^Lz_l^{-1}\int_{e_l}|w-W_l|^2ds
     \leq \underbrace{\int_\Omega(\tilde{\sigma}_h-\sigma)\nabla u \cdot\nabla (\bar{u}_h-\tilde{u}_h)dx}_{I} + \underbrace{\int_\Omega\tilde{\sigma}_h\nabla w\cdot\nabla (u-\bar{u}_h)dx}_{II} \\& + \underbrace{\int_{\Omega\setminus\Omega_h}\tilde{\sigma}_h\nabla \tilde{u}_h\cdot\nabla (\bar{u}_h-\tilde{u}_h)dx}_{III}
      +\sum_{l=1}^Lz_l^{-1}\underbrace{\int_{e_l}(w-W_l)(u-\bar{u}_h)ds}_{IV}\\
      &+ \sum_{l=1}^L z_l^{-1}\underbrace{\left[\int_{e_l}(\tilde{u}_h-U_{h,l})((\bar{u}_h-\tilde{u}_h)-W_l)ds - \int_{e_{l,h}}(u_h-U_{h,l})((\mathcal{R}_hu-u_h)-W_l)ds\right]}_{V}.
   \end{aligned}
\end{equation*}

Next we estimate the five terms ($I$--$V$) on the right hand side. For the first term $I$, by the
generalized H\"{o}lder's inequality we have
\begin{equation*}
  |I|\leq \|\tilde{\sigma}_h-\sigma\|_{L^p(\Omega)}\|\nabla u\|_{L^q(\Omega)}\|\nabla(\bar{u}_h-\tilde{u}_h)\|_{L^2(\Omega)},
\end{equation*}
where the exponent $q$ is from Theorem \ref{thm:cemreg}, and the exponent $p>0$ satisfies
$\frac{1}{p}+\frac{1}{q}=\frac{1}{2}$. The factor $\|\nabla (\bar{u}-\tilde{u}_h)\|_{L^2(\Omega)}$
is uniformly bounded due to the bounds on $u_h$ and $\mathcal{R}_hu$
and Lemma \ref{lem:femext}. Meanwhile, there holds $\|\tilde{\sigma}_h-\sigma\|_{L^p(\Omega)}\leq
C\|\tilde{\sigma}_h-\sigma\|_{L^1(\Omega)}^\frac{1}{p}\to0$. Hence, the first term $I\to0$ as $h\to0$.
For the second term $II$, in view of Lemma \ref{lem:riesz} and the uniform bound of the
discrete admissible set $\mathcal{A}_h$, we have
\begin{equation*}
    |II| \leq \|\tilde{\sigma}_h\|_{L^\infty(\Omega)}\|\nabla w\|_{L^2(\Omega)}\|\nabla (u-\bar{u}_h)\|_{L^2(\Omega)}\to0.
\end{equation*}
Similarly, Lemma \ref{lem:femext} and uniform boundedness of $\|\nabla u_h\|_{L^2(\Omega)}$
and $\|\nabla (\mathcal{R}_hu-u_h)\|_{L^2(\Omega)}$ yield
\begin{equation*}
  \begin{aligned}
    |III|&\leq C\|\tilde{\sigma}_h\|_{L^\infty(\Omega\backslash\Omega_h)}\|\nabla\tilde{u}_h\|_{L^2(\Omega\backslash\Omega_h)}
      \|\nabla(\bar{u}_h-\tilde{u}_h)\|_{L^2(\Omega\backslash\Omega_h)} \\
    &\leq C\epsilon_h\|\nabla u_h\|_{L^2(\Omega_h)}\|\nabla(\mathcal{R}_hu-u_h)\|_{L^2(\Omega_h)} \rightarrow 0.
  \end{aligned}
\end{equation*}

Next we consider the boundary terms. By the trace theorem \cite{Evans:1992a} and the approximation
property of $\mathcal{R}_h$ in Lemma \ref{lem:riesz}, we get
\begin{equation*}
  \begin{aligned}
   |IV| & \leq\|w-W_l\|_{L^2(e_l)}\|u-\bar{u}_h\|_{L^2(e_l)}\leq C\|w-W_l\|_{L^2(e_l)}\|u-\bar{u}_h\|_{H^1(\Omega)}\to0.
  \end{aligned}
\end{equation*}
Lastly, for the term $V$, it suffices to consider the quantity
$\int_{e_{l,h}}u_h v_hds-\int_{e_l}\tilde{u}_h\tilde{v}_hds$ with $v_h=\mathcal{R}_hu-u_h$,
and the remaining terms can be bounded similarly.
By letting $\tilde{e}_{l,h} = \psi_h(e_{l,h})$, the triangle inequality and Lemma \ref{lem:femextbry}, we deduce that
\begin{equation*}
  \begin{aligned}
    \left|\int_{e_{l,h}}\! u_h v_hds -\int_{e_l}\! \tilde{u}_h \tilde{v}_hds \right|
    \leq&\left|\int_{e_{l,h}} u_h v_hds - \int_{\tilde{e}_{l,h}} \tilde{u}_h \tilde{v}_hds\right| + \left|\int_{e_l\backslash{\tilde{e}_{l,h}}}
    \tilde{u}_h\tilde{v}_hds \right|\\
    \leq& \epsilon_h\|u_h v_h\|_{L^1(e_{l,h})} + \|\tilde{u}_h\tilde{v}_h\|_{L^1(e_l\backslash{\tilde{e}_{l,h}})} \\
    \leq&\epsilon_h\|u_h v_h\|_{L^1(e_{l,h})} + \|\tilde{u}_h\tilde{v}_h\|_{L^1(e_l\backslash{\tilde{e}_{l,h}})}
     =: I_1 + I_2.
  \end{aligned}
\end{equation*}
It remains to bound the terms $I_1$ and $I_2$. By Sobolev embedding
theorem \cite{Evans:1992a} and uniform boundness of $u_h$ in $H^1(\Omega_h)$,
$\|u_h v_h\|_{L^1(e_{l,h})}$ is uniformly bounded for all $h$,
hence $I_1\rightarrow 0$ as $h\rightarrow 0$. By Sobolev embedding
theorem, $H^1(\Omega)$ embeds continuously into $L^4(\Gamma)$ ($d=2,3$), and
thus by H\"{o}lder's inequality, we deduce $\tilde{u}_h\tilde{v}_h\in L^2(\Gamma)$.
Hence we can estimate the term $I_2$ by H\"{o}lder's inequality, the trace theorem
and the uniform boundedness of $\tilde{u}_h$ and $\tilde{v}_h$ in $H^1(\Omega)$ as follows
\begin{equation*}
  \begin{aligned}
    I_2 &\leq \|\tilde{u}_h\tilde{v}_h\|_{L^2(e_l\setminus\tilde{e}_{l,h})}|e_l\setminus\tilde{e}_{l,h}|^\frac{1}{2}\\
      & \leq \|\tilde{u}_h\|_{L^4(e_l)}\|\tilde{v}_h\|_{L^4(e_l)}|e_l\setminus\tilde{e}_{l,h}|^\frac{1}{2}\\
      & \leq C\|\tilde{u}_h\|_{H^1(\Omega)}\|\tilde{v}_h\|_{H^1(\Omega)}|e_l\setminus\tilde{e}_{l,h}|^\frac{1}{2}\to 0.
  \end{aligned}
\end{equation*}
Now Lemma \ref{lem:discon} follows directly from the preceding estimates.
\end{proof}

Finally, we analyze the discrete optimization problem for curved domains:
\begin{equation}\label{eqn:disopt:curved}
 \min_{\sigma_h\in\mathcal{A}_h}\left\{J_h(\sigma_h) = \tfrac{1}{2}\|U_h(\sigma_h)-U^\delta\|^2 + \eta\Psi_h(\sigma_h)\right\},
\end{equation}
where the discrete approximation $U_h(\sigma_h)$ is defined by the finite element
system \eqref{equ:dis}, and the discrete penalty functional $\Psi_h(\sigma_h)$ is defined by
\begin{equation*}
  \Psi_h(\sigma_h) = \left\{\begin{aligned}
    \tfrac{1}{2}\|\jmath\sigma_h\|_{H^1(\Omega_h)}^2, & \mbox{ smoothness},\\
    |\jmath\sigma_h|_{\mathrm{TV}(\Omega_h)}, & \mbox{ total variation}.
  \end{aligned}\right.
\end{equation*}
We observe that the penalty functional is defined only on the polyhedral approximation
$\Omega_h$, so the discrete optimization problem involves only computations on the approximate domain
$\Omega_h$ as well. Like before, the existence of a minimizer $\sigma_h^\ast\in\mathcal{A}_h$
to the discrete functional $J_h(\sigma_h)$ follows immediately from the compactness and
norm equivalence in finite-dimensional spaces.

We now can show the convergence of the finite element approximation for curved domains.
\begin{thm}
Let Assumption \ref{ass:poly} be fulfilled and $\{\sigma_h^\ast\}_{h>0}$ be a sequence
of minimizers to problem \eqref{eqn:disopt:curved}.
Then the sequence $\{\jmath\sigma_h^\ast\}_{h>0}$ contains a convergent subsequence to a
minimizer of problem \eqref{eqn:tikh} as
the mesh size $h$ tends to zero.
\begin{itemize}
  \item[(a)] The convergence is weakly in $H^1(\Omega)$, if $\Psi_h(\sigma_h)=\tfrac{1}{2}\|\sigma_h\|_{H^1(\Omega_h)}^2$;
  \item[(b)] The convergence is in $L^1(\Omega)$, if $\Psi_h(\sigma_h)=|\sigma_h|_{\mathrm{TV}(\Omega_h)}$.
\end{itemize}
\end{thm}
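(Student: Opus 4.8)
The plan is to follow the template of the proof of Theorem \ref{thm:polyconv}, systematically replacing each ingredient by its curved-domain counterpart: the discrete forward-map convergence of Lemma \ref{lem:polyhed} is replaced by Lemma \ref{lem:discon}, and the interpolation/projection bounds are supplemented by the extension estimate of Lemma \ref{lem:femext} and the Riesz projection estimate of Lemma \ref{lem:riesz}. First I would exploit that the constant function $\sigma_h\equiv1$ lies in $\mathcal{A}_h$ for every $h$, so the minimizing property of $\sigma_h^\ast$ forces $\{J_h(\sigma_h^\ast)\}$, and hence $\{\Psi_h(\sigma_h^\ast)\}$, to be uniformly bounded. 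This bounds $\|\jmath\sigma_h^\ast\|_{H^1(\Omega_h)}$ (respectively $|\jmath\sigma_h^\ast|_{\mathrm{TV}(\Omega_h)}$) uniformly in $h$. To pass to a bound on all of $\Omega$, I would invoke Lemma \ref{lem:femext} with $p=2$ (respectively $p=1$), which controls the contribution on $\Omega\backslash\Omega_h$ by $C\epsilon_h^{1/p}$ times the $\Omega_h$-norm; together with $\lambda\le\jmath\sigma_h^\ast\le\lambda^{-1}$, so that $\jmath\sigma_h^\ast\in\mathcal{A}$, this yields a uniform bound on $\|\jmath\sigma_h^\ast\|_{H^1(\Omega)}$ (respectively $\|\jmath\sigma_h^\ast\|_{\mathrm{BV}(\Omega)}$).

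Next I would extract a subsequence, still denoted $\{\jmath\sigma_h^\ast\}$, converging weakly in $H^1(\Omega)$ or $\mathrm{BV}(\Omega)$ to some $\sigma^\ast\in\mathcal{A}$; by the compact embedding of Lemma \ref{lem:embed} the convergence is strong in $L^1(\Omega)$. Lemma \ref{lem:discon} then gives $(\jmath u_h(\sigma_h^\ast),U_h(\sigma_h^\ast))\to(u(\sigma^\ast),U(\sigma^\ast))$ in $\mathbb{H}$, and in particular $U_h(\sigma_h^\ast)\to U(\sigma^\ast)$, so the fidelity terms converge. The crucial step is the lower semicontinuity $\Psi(\sigma^\ast)\le\liminf_{h\to0}\Psi_h(\sigma_h^\ast)$: here the weak lower semicontinuity of the $H^1$-norm (or of the total variation seminorm under $L^1$ convergence) controls the $\Omega$-quantity by $\liminf\|\jmath\sigma_h^\ast\|_{H^1(\Omega)}$ (or $\liminf|\jmath\sigma_h^\ast|_{\mathrm{TV}(\Omega)}$), while the $\Omega\backslash\Omega_h$ part of these $\Omega$-norms vanishes by Lemma \ref{lem:femext}, so the $\Omega$-norm and the $\Omega_h$-norm defining $\Psi_h$ share the same $\liminf$. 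Combining these two ingredients exactly as in \eqref{eqn:lsc} yields $J(\sigma^\ast)\le\liminf_{h\to0}J_h(\sigma_h^\ast)$.

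It then remains to prove optimality, i.e.\ $J(\sigma^\ast)\le J(\sigma)$ for every $\sigma\in\mathcal{A}$, by constructing a recovery sequence in $\mathcal{A}_h$. For the smoothness penalty I would use density of $C^\infty(\overline{\Omega})$ in $H^1(\Omega)$ to pick $\sigma^\epsilon\in C^\infty(\overline{\Omega})\cap\mathcal{A}$ with $\sigma^\epsilon\to\sigma$ in $H^1(\Omega)$, and take $\sigma_h=\mathcal{I}_h\sigma^\epsilon$; for the total variation penalty I would instead invoke Lemma \ref{lem:density} followed by the pointwise projection $P_{[c_0,c_1]}$ to obtain $\tilde\sigma^\epsilon\in\mathcal{A}\cap W^{1,\infty}(\Omega)$ with $\int_\Omega|\nabla\tilde\sigma^\epsilon|\le\int_\Omega|D\sigma|+\epsilon$ and $\|\tilde\sigma^\epsilon-\sigma\|_{L^1(\Omega)}<\epsilon$, and take $\sigma_h=\mathcal{I}_h\tilde\sigma^\epsilon$. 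Since the interpolant of a $W^{1,\infty}$ function converges in $W^{1,1}(\Omega_h)$ and the extension to $\Omega\backslash\Omega_h$ is negligible (Lemma \ref{lem:femext}, with $|\Omega\backslash\Omega_h|\to0$), I expect $\jmath\mathcal{I}_h\sigma^\epsilon\to\sigma^\epsilon$ in $L^1(\Omega)$ and $\Psi_h(\mathcal{I}_h\sigma^\epsilon)\to\Psi(\sigma^\epsilon)$. Lemma \ref{lem:discon} then gives $J_h(\mathcal{I}_h\sigma^\epsilon)\to J(\sigma^\epsilon)$, and the minimizing property $J_h(\sigma_h^\ast)\le J_h(\mathcal{I}_h\sigma^\epsilon)$ together with the previous paragraph yields $J(\sigma^\ast)\le J(\sigma^\epsilon)$; letting $\epsilon\to0$ and using the continuity of Lemma \ref{lem:contpoly} closes the argument.

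The main obstacle is the bookkeeping of the domain mismatch between $\Omega$ and $\Omega_h$, which pervades both the lower semicontinuity step and the recovery construction, since $\Psi_h$ is computed on $\Omega_h$ while the limit functional lives on $\Omega$. The reconciliation rests entirely on the fact, quantified by Lemma \ref{lem:femext} and the measure estimates of Lemma \ref{lem:assump}, that the extension $\jmath$ contributes only an $o(1)$ term on the thin shell $\Omega\backslash\Omega_h$; verifying that this term disturbs neither the $\liminf$ (for lower semicontinuity) nor the $\limsup$ (for the recovery sequence) of the penalty is the one point requiring genuine care. One should also take the regularization parameter $\eta$ in \eqref{eqn:disopt:curved} equal to the fixed $\alpha$ of \eqref{eqn:tikh} for the stated convergence to a minimizer of \eqref{eqn:tikh} to make sense.
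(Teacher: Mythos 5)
Your proposal is correct and follows essentially the same route as the paper: uniform boundedness of the discrete penalties via the constant admissible function, reconciliation of the $\Omega$- and $\Omega_h$-penalties through Lemma \ref{lem:femext} (using that $\jmath\sigma_h^\ast$ is piecewise linear so its BV seminorm is a $W^{1,1}$ seminorm), compact embedding to extract an $L^1$-convergent subsequence, Lemma \ref{lem:discon} for the fidelity term, and the same recovery sequences $\mathcal{I}_h\sigma^\epsilon$ and $\mathcal{I}_h\tilde\sigma^\epsilon$ for the two penalties. The paper's proof is exactly this argument written out, including your closing observation that the domain-mismatch term must be shown to perturb neither the $\liminf$ in the lower-semicontinuity step nor the limit along the recovery sequence.
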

\begin{proof}
We note that the constant function $\sigma_h=1$ (with $\jmath\sigma_h=1$) lies in the
admissible set $\mathcal{A}_h$ for any $h$. Therefore, the sequence
$\{\Psi_h(\sigma_h^\ast)\}$ is uniformly bounded. Next
\begin{equation*}
  \Psi(\jmath\sigma_h^\ast)-\Psi_h(\sigma_h^*) = \left\{\begin{aligned}
    \tfrac{1}{2}\|\jmath\sigma_h^\ast\|_{H^1(\Omega\setminus\Omega_h)}^2, & \mbox{ case (a)},\\
    |\jmath\sigma_h^\ast|_{\mathrm{TV}(\Omega\setminus\Omega_h)}, & \mbox{ case (b)}.
  \end{aligned}\right.
\end{equation*}
Note that the function $\jmath\sigma_h^\ast$ is continuous and piecewise linear, and thus
the bounded variation norm agrees with the $W^{1,1}(\Omega)$-norm.
Hence we can apply Lemma \ref{lem:femext} to obtain
\begin{equation*}
  \lim\sup_{h\to0} \Psi(\jmath\sigma_h^\ast)-\Psi_h(\sigma_h^\ast) = 0.
\end{equation*}
Hence the sequence $\{\Psi(\jmath\sigma_h^\ast)\}$ is also uniformly bounded, and by Lemma \ref{lem:embed},
there exists a subsequence of $\{\jmath\sigma_h^\ast\}$ such that
$\jmath \sigma_h^\ast \rightarrow \sigma^*$ in $L^1(\Omega)$.
By Lemma \ref{lem:discon}, we have $U_{l,h}(\sigma_h^\ast)\rightarrow U_l(\sigma^\ast)$ as $h \rightarrow 0$.
This together with the weak lower semicontinuity of norms, we deduce that
\begin{equation*}
   \begin{aligned}
    J(\sigma^\ast) &=\tfrac{1}{2}\|U(\sigma^\ast)-U^\delta\|^2 + \alpha\Psi(\sigma^\ast)\\
     &\leq \lim_{h\rightarrow0}\tfrac{1}{2}\|U_h(\sigma_h^\ast)-U^\delta\|^2 + \alpha\liminf_{h\rightarrow0}\Psi(\jmath\sigma_h^\ast)\\
     &\leq\liminf_{h\rightarrow0}\left(\tfrac{1}{2}\|U_h(\sigma_h^\ast)-U^\delta\|^2 + \alpha\Psi_h(\sigma_h^\ast) + \alpha(\Psi(\jmath\sigma_h^\ast)-\Psi_h(\sigma_h^\ast))\right)\\
     & \leq \liminf_{h\rightarrow0}J_h(\sigma_h^\ast) + \limsup_{h\rightarrow 0}\alpha(\Psi(\jmath\sigma_h^\ast)-\Psi_h(\sigma_h^\ast))=  \liminf_{h\rightarrow0}J_h(\sigma_h^\ast),
   \end{aligned}
\end{equation*}

Now we proceed as in the proof of Theorem \ref{thm:polyconv} by considering cases (a) and (b) separately.
For case (a), by the density of $C^\infty(\overline{\Omega})$ in $H^1(\Omega)$, we may assume $\sigma\in C^\infty(\overline{\Omega})\cap \mathcal{A}$,
then $\mathcal{I}_h\sigma \in \mathcal{A}_h$. Further,
\begin{equation*}
   \begin{aligned}
     \|\jmath\mathcal{I}_h\sigma - \sigma\|_{H^1(\Omega)}^2 &= \|\jmath\mathcal{I}_h\sigma - \sigma\|_{H^1(\Omega_h)}^2 + \|\jmath\mathcal{I}_h\sigma - \sigma\|_{H^1(\Omega\backslash\Omega_h)}^2 \\
      &\leq \|\mathcal{I}_h\sigma - \sigma\|_{H^1(\Omega_h)}^2 + 2(\|\jmath\mathcal{I}_h\sigma\|_{H^1(\Omega\backslash\Omega_h)}^2 + \|\sigma\|_{H^1(\Omega\backslash\Omega_h)}^2)\\
      &\leq \|\mathcal{I}_h\sigma - \sigma\|_{H^1(\Omega_h)}^2 + C\epsilon_h \|\mathcal{I}_h\sigma\|_{H^1(\Omega_h)} + 2\|\sigma\|_{H^1(\Omega\backslash\Omega_h)}^2 \rightarrow 0.
\end{aligned}
\end{equation*}
Then by Lemma \ref{lem:discon}, $U_h(\mathcal{I}_h\sigma) \rightarrow U(\sigma)$ and thus
$J(\sigma) = \lim_{h\to 0} J_h(\mathcal{I}_h\sigma) \geq \liminf_{h\to0} J_h(\sigma_h^\ast) = J(\sigma^\ast)$,
i.e., $\sigma^*$ is a minimizer to the continuous functional.

Next consider case (b). For any fixed $\sigma\in\mathcal{A}$, by Lemma \ref{lem:density} and the constructions in the proof of
Theorem \ref{thm:polyconv}, for any $\epsilon>0$, there exists $\sigma^\epsilon\in C(\overline{\Omega})$ such that
\begin{equation*}
  \int_\Omega |\sigma^\epsilon-\sigma|dx <\epsilon\quad\mbox{and}\quad \left|\int_\Omega|\nabla \sigma^\epsilon|-\int_\Omega |D\sigma|\right|<\epsilon.
\end{equation*}
Then $\tilde{\sigma}^\epsilon=P_{[c_0,c_1]}(\sigma^\epsilon)\in W^{1,\infty}(\Omega)\cap\mathcal{A}$. We take $\sigma_h = \mathcal{I}_h\tilde{\sigma}^\epsilon$. Then by Lemma \ref{lem:femext} there holds
\begin{equation*}
   \begin{aligned}
      \|\jmath\mathcal{I}_h\tilde{\sigma}^\epsilon - \tilde{\sigma}^\epsilon\|_{W^{1,1}(\Omega)} &= \|\jmath\mathcal{I}_h\tilde{\sigma}^\epsilon - \tilde{\sigma}\|_{W^{1,1}(\Omega_h)} + \|\jmath\mathcal{I}_h\tilde{\sigma}^\epsilon - \tilde{\sigma}^\epsilon\|_{W^{1,1}(\Omega\backslash\Omega_h)} \\
      &\leq \|\mathcal{I}_h\tilde{\sigma}^\epsilon-\tilde{\sigma}^\epsilon\|_{W^{1,1}(\Omega_h)}+ \|\jmath\mathcal{I}_h\tilde{\sigma}^\epsilon\|_{W^{1,1}(\Omega\backslash\Omega_h)}+ \|\tilde{\sigma}^\epsilon\|_{W^{1,1}(\Omega\backslash\Omega_h)}\\
      &\leq \|\mathcal{I}_h\tilde{\sigma}^\epsilon-\tilde{\sigma}^\epsilon\|_{W^{1,1}(\Omega_h)}+C\epsilon_h \|\mathcal{I}_h\tilde{\sigma}^\epsilon\|_{W^{1,1}(\Omega_h)} + \|\tilde{\sigma}^\epsilon\|_{W^{1,1}(\Omega\backslash\Omega_h)} \rightarrow 0.
\end{aligned}
\end{equation*}
Then by Lemma \ref{lem:discon}, $U_h(\mathcal{I}_h\tilde{\sigma}^\epsilon)\rightarrow U(\tilde{\sigma}^\epsilon)$ and
thus $J(\tilde{\sigma}^\epsilon) = \lim_{h\to0}J_h(\mathcal{I}_h\tilde{\sigma}^\epsilon)
\geq \liminf_{h\to0}J_h(\sigma_h^\ast)=J(\sigma^\ast)$.
The rest is identical with the proof in Theorem \ref{thm:polyconv}. This concludes the proof of the theorem.
\end{proof}

\section{Concluding remarks}\label{sect:concl}

We have provided a convergence analysis of finite element approximations
of the electrical impedance tomography with the popular complete electrode model.
We investigated regularization formulations of Tikhonov type
with either the smoothness or total variation penalty, which represent two most popular imaging
algorithms in practice. The convergence for both polyhedral and convex smooth curved domains has
been established. The latter relies on a careful analysis of the errors incurred by the domain
approximation. This provides partial theoretical justifications of the discretization
strategies. One immediate future problem is the convergence rates analysis, i.e.,
the error between the (discrete) approximation and the true solution in terms
of the noise level and mesh size. This would shed valuable insights into the
practically very important question of designing discretization strategies compatible with
the regularization parameter and noise level so as to effect optimal computational complexity.

\section*{Acknowledgements}
The work was started during a visit of the first author (MG) at Institute of Applied Mathematics
and Computational Science, Texas A\&M University. He would like to thank the institute
for the hospitality. The work of the second author (BJ) was partially
supported by NSF Grant DMS-1319052, and that of the third author (XL) was supported
by National Science Foundation of China No. 11101316 and No. 91230108.
\bibliographystyle{abbrv}
\bibliography{eit}
\end{document}